\pdfoutput=1
\RequirePackage{ifpdf}
\ifpdf 
\documentclass[pdftex]{sigma}
\else
\documentclass{sigma}
\fi

\numberwithin{equation}{section}

\newtheorem{Theorem}{Theorem}[section]
\newtheorem*{Theorem*}{Theorem}
\newtheorem{Corollary}[Theorem]{Corollary}
\newtheorem{Lemma}[Theorem]{Lemma}

 { \theoremstyle{definition}

\newtheorem{Example}[Theorem]{Example}
\newtheorem{Remark}[Theorem]{Remark} }

\def\C{{\mathbb C}}
\def\la{\lambda}
\def\al{\al}
\def\dl{\delta}
\def\der{\partial}
\def\Z{\mathbb Z}
\def\FF{\mathbb F}

\def\slt{\mathfrak{sl}_2}

\def\ox{\otimes}
\def\Om{\Omega}
\def\ka{\kappa}
\def\Ik{{\mathcal{I}_k}}
\def\Sym{{\text{Sym}}}
\def\Alt{{\text{Alt}}}

\begin{document}
\allowdisplaybreaks

\newcommand{\arXivNumber}{2304.07843}

\renewcommand{\PaperNumber}{061}

\FirstPageHeading

\ShortArticleName{Polynomial Solutions Modulo $p^s$ of Differential KZ and Dynamical Equations}

\ArticleName{Polynomial Solutions Modulo $\boldsymbol{p^s}$ of Differential KZ\\ and Dynamical Equations}

\Author{Pavel ETINGOF~$^{\rm a}$ and Alexander VARCHENKO~$^{\rm b}$}

\AuthorNameForHeading{P.~Etingof and A.~Varchenko}

\Address{$^{\rm a)}$~Department of Mathematics, MIT, Cambridge, MA 02139, USA}
\EmailD{\href{mailto:etingof@math.mit.edu}{etingof@math.mit.edu}}

\Address{$^{\rm b)}$~Department of Mathematics, University of North Carolina at Chapel Hill,\\
\hphantom{$^{\rm b)}$}~Chapel Hill, NC 27599-3250, USA}
\EmailD{\href{mailto:anv@email.unc.edu}{anv@email.unc.edu}}

\ArticleDates{Received April 18, 2023, in final form August 23, 2023; Published online September 01, 2023}

\Abstract{We construct polynomial solutions modulo $p^s$ of the differential KZ and dynamical equations where $p$ is an odd prime number.}

\Keywords{differential KZ and dynamical equations; polynomial solutions modulo $p^s$; hypergeometric integrals}

\Classification{81R12; 11C08; 14H52}

\section{Introduction}\label{sec 1}

The KZ equations were introduced by Knizhnik and Zamolodchikov \cite{KZ}
to describe the differential equations for conformal blocks on the Riemann sphere.
Different versions of the KZ equations appear in mathematical physics, algebraic geometry
and the theory of special functions, see, for example, \cite{EFK, MO}.
One of the important properties of the KZ equations is their
realization
 as suitable Gauss--Manin connections. This construction gives a presentation
 of solutions of the~KZ equations by multidimensional hypergeometric integrals,
see \cite {CF,DJMM, SV1}.

The fact that certain integrals of closed differential forms over cycles
satisfy a linear differential equation
 follows by Stokes' theorem from a suitable cohomological relation,
in which the result of the application of the corresponding differential operator to the integrand of an integral
equals the differential of a differential form of one degree lower.
Such cohomological relations for the~KZ equations associated with arbitrary
Kac--Moody algebras were developed in \cite{SV2}.

The KZ equations possess a bispectrality property -- they have a compatible system of dynamical equations
with respect to associated dynamical parameters, see \cite{EV, FMTV, MO, OS, TV2}.

Let $p$ be an odd prime.
In \cite{SV3,V3}, the differential KZ equations were considered modulo $p^s$,
and polynomial solutions modulo $p^s$
were constructed as analogs of the hypergeometric integrals.
The construction was based on the fact
that all cohomological relations described in \cite{SV2} are defined over $\mathbb Z$ and can be reduced
modulo $p^s$.
Studying solutions modulo of $p^s$ sheds light on solutions of the KZ equations both over
the field of complex numbers and over $p$-adic fields, for example, see \cite{SmV}.

In this paper, we consider the joint system of the differential KZ and differential dynamical
equations, the system introduced in \cite{FMTV}, and construct polynomial solutions modulo $p^s$ of the joint system
as analogs of the
corresponding hypergeometric integrals with an exponential term.
For this purpose, one needs to represent
the exponential function ${\rm e}^{\lambda t}$ with an integer parameter~$\lambda$ by a polynomial in $t$ modulo $p^s$.
This can be done after replacing $\lambda$ with $p\lambda$.

 An interesting problem is to study the $p$-adic limit of the constructed polynomial solutions modulo $p^s$
 as $s\to\infty$, see examples of this limit for the differential KZ equations in \cite{V4,V3,VZ}.

The joint system of the KZ and dynamical equations has many versions: differential KZ equations and differential
dynamical equations,
differential KZ equations and difference dynamical equations, difference KZ equations and differential dynamical
equations, difference KZ equations and difference dynamical equations,
see, for example, \cite{EV,MO,OS,TV2}. The polynomial solutions modulo $p^s$ are constructed in this paper only for
the original joint system of the differential KZ and differential dynamical equations, although there
are examples of polynomial solutions modulo~$p^s$ in other cases, see \cite{MuV,RV,V4} and also Appendix \ref{sec app}.

In the remainder of the introduction we consider an example.

\subsection[Solutions over C]{Solutions over $\boldsymbol{\C}$}
\label{CaseC}

Consider the complex master function
\begin{gather*}
\Phi(t,z,\la) = {\rm e}^{\la t}\prod_{i=1}^{2g+1}(t-z_i)^{-1/2}
\end{gather*}
and the tuple of integrals
\begin{gather}
\label{int rep}
I(z,\la) = (I_1(z,\la), \dots,I_{2g+1}(z,\la))= \int_\dl\Phi(t,z,\la)
\left(\frac{1}{t-z_1}, \dots, \frac{1}{t-z_{2g+1}}\right){\rm d}t,
\end{gather}
where $\dl$ is a 1-cycle.

\begin{Theorem}\label{thm KZ}
The tuple $I(z,\la)$ satisfies the joint system of KZ and dynamical equations
\begin{gather}
\frac {\der I_j}{\der z_i} =
\frac 12 \frac{I_i-I_j}{z_i-z_j}, \qquad i\ne j,\label{KZ1}
\\
\frac {\der I_i}{\der z_i} = \la I_i - \frac 12 \sum_{j\ne i}\frac{I_i-I_j}{z_i-z_j},\label{KZ2}
\\
\frac {\der I_i}{\der \la} = z_i I_i + \frac 1{2\la} \sum_{j=1}^n I_j.\label{D}
\end{gather}
\end{Theorem}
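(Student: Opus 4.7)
The plan is to differentiate under the integral sign and reduce each identity to a Stokes/Gauss--Manin relation of the form $\int_\dl d(\cdot)=0$, using that $\Phi$ satisfies the logarithmic derivative rules $\der_{z_i}\Phi=\Phi/(2(t-z_i))$ and $\der_\la\Phi=t\,\Phi$, and that $d\Phi=\Phi\bigl[\la-\tfrac12\sum_k(t-z_k)^{-1}\bigr]dt$.

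For equation \eqref{KZ1}, I would differentiate directly: for $i\neq j$,
\[
\frac{\der I_j}{\der z_i}=\int_\dl\frac{\Phi}{2(t-z_i)(t-z_j)}\,dt,
\]
and then apply the partial fraction identity $\frac{1}{(t-z_i)(t-z_j)}=\frac{1}{z_i-z_j}\bigl(\frac{1}{t-z_i}-\frac{1}{t-z_j}\bigr)$ to recognize the right-hand side as $\tfrac12(I_i-I_j)/(z_i-z_j)$. For the dynamical equation \eqref{D}, $\der_\la I_i=\int_\dl \Phi\cdot t/(t-z_i)\,dt$, and writing $t=(t-z_i)+z_i$ splits this as $z_i I_i+\int_\dl\Phi\,dt$; the remaining scalar integral is then evaluated by the Stokes relation $\int_\dl d\Phi=0$, which yields $\la\int_\dl\Phi\,dt=\tfrac12\sum_j I_j$ and hence the desired coefficient $1/(2\la)$.

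The least automatic piece is \eqref{KZ2}. Direct differentiation gives $\der_{z_i}I_i=\int_\dl\tfrac{3\Phi}{2(t-z_i)^2}\,dt$, so I would use the Stokes relation $\int_\dl d\bigl(\Phi/(t-z_i)\bigr)=0$. Expanding the differential produces
\[
0=\la I_i-\tfrac12\sum_{j\neq i}\int_\dl\tfrac{\Phi\,dt}{(t-z_i)(t-z_j)}-\tfrac32\int_\dl\tfrac{\Phi\,dt}{(t-z_i)^2},
\]
after combining the $j=i$ contribution from $\tfrac12\sum_k(t-z_k)^{-1}$ with the $-1/(t-z_i)^2$ coming from $d(1/(t-z_i))$. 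Applying the partial fraction identity of the previous step to the cross terms converts them into $(I_i-I_j)/(z_i-z_j)$, and substituting $\tfrac32\int_\dl\Phi/(t-z_i)^2\,dt=\der_{z_i}I_i$ yields \eqref{KZ2}.

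I expect the main obstacle is purely bookkeeping in \eqref{KZ2}: one must track the factor $3/2$ arising from the two sources of derivative in $\Phi/(t-z_i)$, and make sure the $j=i$ term of the sum $\sum_k 1/(t-z_k)$ in $d\Phi$ is correctly amalgamated with the explicit $d(1/(t-z_i))$ contribution, so that signs and coefficients line up. There are no convergence or contour issues to address beyond assuming $\dl$ is a closed cycle on which $\Phi$ is defined, so Stokes' theorem applies without boundary terms.
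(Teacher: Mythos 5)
Your proof is correct and follows essentially the same route as the paper: differentiate under the integral sign, apply the partial fraction identity for \eqref{KZ1}, and use the Stokes relations $\int_\dl {\rm d}\Phi=0$ and $\int_\dl {\rm d}\bigl(\Phi/(t-z_i)\bigr)=0$ for \eqref{D} and \eqref{KZ2}, with the same $3/2$ bookkeeping. The paper merely phrases the Stokes step by first writing out the identities for $\der_t\Phi$ and $\der_t\bigl(\Phi/(t-z_i)\bigr)$ explicitly, which is the same computation.
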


The system of equations \eqref{KZ1} and \eqref{KZ2} is called the KZ equations of this example, the system of
equations \eqref{D} is called the dynamical equation. The solutions $I(z,\la)$ are called the hypergeometric solutions.

\begin{proof}
The proof uses the following identities:
\begin{align*}
 &\frac 1{(t-z_i)(t-z_j)}= \frac1{z_i-z_j}\bigg(\frac1{t-z_i} - \frac1{t-z_j}\bigg),
\\
&\frac{\der}{\der t}\Phi(t,z,\la)
=
\la \Phi(t,z,\la) -\frac 12 \Phi(t,z,\la) \sum_{j=1}^{2g+1} \frac 1{t-z_j},
\\
&\frac{\der}{\der t}\frac {\Phi(t,z,\la)}{t-z_i}
=
\Phi(t,z,\la) \bigg(\frac\la{t-z_i} -\frac 32\frac1{(t-z_i)^2} -\frac 12\sum_{j\ne i}
\frac{1}{(t-z_i)(t-z_j)}\bigg).
\end{align*}
For $j\ne i$, we have
\begin{gather*}
\frac {\der I_j}{\der z_i} = \int \Phi(t,z,\la) \frac {1/2}{(t-z_i)(t-z_j)}{\rm d}t
=\frac 12 (I_i-I_j).
\end{gather*}
This proves the first equation. Then
\begin{align*}
\frac {\der I_i}{\der z_i}
&= \int \Phi(t,z,\la) \frac {3/2}{(t-z_i)^2}{\rm d}t
\\
&= -\int\frac{\der}{\der t}\frac {\Phi(t,z,\la)}{t-z_i} {\rm d}t +
 \int \Phi(t,z,\la) \bigg(\frac\la{t-z_i} -\frac 12\sum_{j\ne i}
\frac{1}{(t-z_i)(t-z_j)}\bigg){\rm d}t
\\
&
= \la I_i - \frac 12 \sum_{j\ne i}\frac{I_i-I_j}{z_i-z_J}
\end{align*}
gives the second equation. We also have
\begin{align*}
\frac {\der I_i}{\der \la}
&= \int \Phi(t,z,\la)\frac{t-z_i+z_i}{t-z_i} {\rm d}t =
\int \Phi(t,z,\la) {\rm d}t + z_i\int \Phi(t,z,\la)\frac{1}{t-z_i} {\rm d}t
\\
\notag
&
=
\frac 1\la\int
\frac{\der}{\der t}\Phi(t,z,\la){\rm d}t
+\frac 1{2\la} \int \Phi(t,z,\la) \sum_{j=1}^{2g+1} \frac 1{t-z_j}{\rm d}t
+ z_i\int \Phi(t,z,\la)\frac{1}{t-z_i} {\rm d}t
\\
\notag
&
=z_i I_i + \frac 1{2\la} \sum_{j=1}^{2g+1} I_j.\tag*{\qed}
\end{align*}\renewcommand{\qed}{}
\end{proof}

The complex vector space of (multi-valued) solutions of the joint system of KZ and dynamical equations \eqref{KZ1}--\eqref{D}
is $(2g+1)$-dimensional.
Every solution of the joint system
has the integral presentation \eqref{int rep} for a suitable cycle $\dl$, see \cite[Theorem 6.1]{MTV} and an example in
\cite[Introduction]{MTV}.

\subsection{Exponential function}
We have
\begin{gather*}
{\rm e}^{\la t} = \sum_{m=0} ^\infty\la^{(m)}t^m
,\qquad\!
\la^{(m)}=\frac{\la^m}{m!},
\qquad\!\!
\binom{m+n}{m} \la^{(m+n)}
=\la^{(m)} \la^{(m)},
\qquad\! \frac{{\rm d}}{{\rm d}\la}\la^{(m)} = \la^{(m-1)}.
\end{gather*}
We set $ \lambda^{(m)}=0$ for $m<0$ by convention.

 Let $f(t,z) = \sum_{m=0}^\infty b_m(z) t^m$, $b_m(z)\in\Z_p[z]$, where
 $\Z_p$ is the ring of $p$-adic integers and
 $z=(z_1,\dots,z_{2g+1})$ are parameters.
Consider the decomposition
\begin{gather*}
{\rm e}^{\la t} f(t,z) =\sum_{k=0}^\infty c_k(z,\la) t^k,
\end{gather*}
where each $c_k(\la,z)$ is a linear function in finitely many symbols $\la^{(m)}$, $m=0, 1,\dots$,
whose coefficients lie in $\Z_p[z]$.

\begin{Lemma}\label{lem der}
 Let $s$, $\ell$ be positive integers.
Then the coefficient of $t^{\ell p^s-1}$ in the series $\frac{{\rm d}}{{\rm d}t} \!\big({\rm e}^{\la t} \! f(t,z)\big)$
 is divisible by $p^s$, that is, all coefficients of the corresponding linear function in
 symbols $\la^{(m)}$, $m\geq 0$,
 are divisible by $p^s$.
\end{Lemma}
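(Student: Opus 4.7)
The plan is essentially a one-line computation once the bookkeeping is set up, so let me describe that setup carefully and then point out where to apply it.

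First I would expand $e^{\lambda t} f(t,z)$ coefficient-by-coefficient. Since $e^{\lambda t}=\sum_{m\geq 0}\lambda^{(m)}t^m$ and $f(t,z)=\sum_{n\geq 0}b_n(z)t^n$, the Cauchy product gives
\begin{gather*}
c_k(z,\lambda)=\sum_{m=0}^{k}\lambda^{(m)}b_{k-m}(z),
\end{gather*}
so $c_k$ is indeed a $\mathbb Z_p[z]$-linear combination of the symbols $\lambda^{(m)}$, with the coefficient of $\lambda^{(m)}$ equal to $b_{k-m}(z)\in\mathbb Z_p[z]$.

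Next I would apply $\frac{d}{dt}$ formally, so that the coefficient of $t^{k-1}$ in $\frac{d}{dt}\bigl(e^{\lambda t}f(t,z)\bigr)$ is $k\,c_k(z,\lambda)$. Specialising to $k=\ell p^s$, the coefficient of $t^{\ell p^s-1}$ equals
\begin{gather*}
\ell p^s\,c_{\ell p^s}(z,\lambda)=\ell p^s\sum_{m=0}^{\ell p^s}\lambda^{(m)}b_{\ell p^s-m}(z).
\end{gather*}
The coefficient of each symbol $\lambda^{(m)}$ on the right-hand side is $\ell p^s\,b_{\ell p^s-m}(z)$, which lies in $p^s\mathbb Z_p[z]$ because $b_{\ell p^s-m}(z)\in\mathbb Z_p[z]$. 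This is the desired divisibility, so the lemma follows.

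There is really no obstacle here: the whole content is that termwise differentiation produces the factor $\ell p^s$, which absorbs the $p^s$ needed for the congruence, and the rest of the coefficient is $p$-adically integral by the hypothesis $b_n(z)\in\mathbb Z_p[z]$. The one thing to be careful about in the write-up is to work entirely in the formal language of the symbols $\lambda^{(m)}$, since the actual divided powers $\lambda^m/m!$ are not $p$-adically integral; the lemma is really a statement about the integer coefficients that multiply these symbols.
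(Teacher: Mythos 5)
Your proof is correct and rests on the same mechanism as the paper's: the coefficient of $t^{\ell p^s-1}$ in the $t$-derivative is $\ell p^s$ times the coefficient of $t^{\ell p^s}$ in the original series, and the latter is a $\mathbb{Z}_p[z]$-linear combination of the symbols $\la^{(m)}$. The paper reaches the same conclusion slightly less directly, by reducing to $f=t^a$, applying the product rule, and using the identity $\la\cdot\la^{(m)}=(m+1)\la^{(m+1)}$ to recombine the two resulting sums into $\sum_k(k+1)\la^{(k-a+1)}t^k$; your termwise differentiation of the Cauchy product shortcuts that bookkeeping.
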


\begin{proof}
It is enough to prove the lemma for $f(t)=t^a$. Then
\begin{align*}
\frac {\rm d}{{\rm d}t}{\rm e}^{\la t} t^a &=
\sum_{m=0}^\infty \big(\la \cdot \la^{(m)} t^{m+a} + a \la^{(m)}t^{m+a-1}\big)
= \sum_{m=0}^\infty \big((m+1) \la^{(m+1)} t^{m+a} + a \la^{(m)}t^{m+a-1}\big)
\\
&=
\sum_{k=0}^\infty \big((k-a+1)\la^{(k-a+1)}t^k + a\la^{(k-a+1)}t^k \big)
= \sum_{k=0}^\infty (k+1)\la^{(k-a+1)}t^k.\tag*{\qed}
\end{align*}\renewcommand{\qed}{}
\end{proof}

\begin{Lemma} If $\la\in \Z_p$, then $p^k\la^{(k)} \in \Z_p$ for all $k\geq 0$.
If $s$ is a positive integer and $k > s\frac{p-1}{p-2}$, then $p^k\la^{(k)}\in p^s\Z_p$.
\end{Lemma}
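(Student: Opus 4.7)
The plan is to reduce both parts of the statement to estimates on the $p$-adic valuation of $k!$ via Legendre's formula. Writing $\la^{(k)} = \la^k/k!$, we have
\[
v_p\bigl(p^k\la^{(k)}\bigr) = k + k\,v_p(\la) - v_p(k!) \geq k - v_p(k!),
\]
since $\la\in\Z_p$ ensures $v_p(\la)\geq 0$. Thus everything comes down to lower bounds on $k - v_p(k!)$.

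For the first claim, I would invoke Legendre's identity
\[
v_p(k!) = \frac{k - S_p(k)}{p-1},
\]
where $S_p(k)$ denotes the sum of the base-$p$ digits of $k$. Since $S_p(k)\geq 0$, this gives $v_p(k!) \leq k/(p-1) \leq k$ whenever $p\geq 2$, so $v_p(p^k\la^{(k)})\geq 0$ and $p^k\la^{(k)}\in\Z_p$, with the case $k=0$ handled directly.

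For the second claim, I would refine the estimate using $S_p(k)\geq 1$ for $k\geq 1$, so $v_p(k!) \leq (k-1)/(p-1)$, and hence
\[
k - v_p(k!) \geq \frac{k(p-2)+1}{p-1}.
\]
The hypothesis $k > s(p-1)/(p-2)$ is meaningful because $p$ is odd, i.e., $p-2\geq 1$; it is equivalent to the strict inequality $k(p-2) > s(p-1)$ between integers, which therefore promotes to $k(p-2)\geq s(p-1)+1$. Substituting gives $k - v_p(k!) \geq s + 2/(p-1) \geq s$, yielding $p^k\la^{(k)}\in p^s\Z_p$.

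There is no real obstacle: the proof amounts to a short calculation with Legendre's formula. The only thing requiring a bit of care is the promotion of the strict rational inequality in the hypothesis to an integer inequality on $k(p-2)$, and it is at exactly this step that the oddness assumption on $p$ is essentially used.
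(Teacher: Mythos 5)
Your proof is correct and takes essentially the same route as the paper: both bound $v_p(k!)$ from above via Legendre's formula (the paper uses the geometric-series estimate $v_p(k!)\le k/(p-1)$, you use the exact digit-sum form) and then compare $k-v_p(k!)$ with $s$. Your extra refinements --- invoking $S_p(k)\ge 1$ and promoting the strict inequality to $k(p-2)\ge s(p-1)+1$ --- are harmless but not needed, since the cruder bound already gives $k-v_p(k!)\ge k\frac{p-2}{p-1}\ge s$ whenever $k\ge s\frac{p-1}{p-2}$, which is how the paper concludes.
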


\begin{proof}
The maximal power of $p$ dividing $k!$ equals
$\big[\frac k{p}\big] + \big[\frac k{p^2}\big] + \big[\frac k{p^3}\big]+\cdots$
which is not greater than
$\frac k{p} + \frac k{p^2}+ \frac k{p^3}+ \dots =\frac k{p-1}.$
Hence the power of $p$ dividing $\frac {p^k}{k!}$ is not less than $k-\frac k{p-1} = k\frac{p-2}{p-1}>0$.
Hence $p^k\la^{(k)} \in \Z_p$. We have $ k\frac{p-2}{p-1}\geq s$ if $ k\geq s\frac{p-1}{p-2}$.
\end{proof}

Denote
\[
d(s) = \bigg[s\frac{p-1}{p-2}\bigg]+1,
\qquad
E_s(t) = \sum_{k=0}^{d(s)} \frac{t^k}{k!}.
\]

\begin{Corollary}
 If $\la\in \Z_p$, then ${\rm e}^{p\la t}\in \Z_p[[t]]$, $E_s(p\la t)\in \Z_p[t]$ and
\begin{gather*}
{\rm e}^{p\la t} \equiv E_s(p\la t) \pmod{p^s},
\\
\frac{\der}{\der t}E_s(p\la t)
\equiv
 p\la E_s(p\la t), \qquad
\frac{\der}{\der \la }E_s(p\la t) \equiv pt E_s(p\la t)
 \pmod{p^s},
 \\
 E_s(p\la (u+v)) \equiv E_s(p\la u) E_s(p\la v)
 \pmod{p^s}.
\end{gather*}

\end{Corollary}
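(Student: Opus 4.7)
The plan is to deduce every clause of the corollary from the preceding lemma on the $p$-adic valuations of the $\la^{(k)}$, combined with term-by-term manipulations of the truncated exponential.

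First, the inclusions ${\rm e}^{p\la t}\in\Z_p[[t]]$ and $E_s(p\la t)\in\Z_p[t]$ are immediate, because the coefficient of $t^k$ in either series is $p^k\la^{(k)}$, which the previous lemma puts in $\Z_p$ for every $k\ge 0$. For the congruence ${\rm e}^{p\la t}\equiv E_s(p\la t)\pmod{p^s}$ one just inspects the tail: the difference is $\sum_{k>d(s)}p^k\la^{(k)}t^k$, and since $d(s)=[s(p-1)/(p-2)]+1>s(p-1)/(p-2)$, every index $k>d(s)$ satisfies $k>s(p-1)/(p-2)$, so the second half of the previous lemma gives $p^k\la^{(k)}\in p^s\Z_p$ termwise.

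For the two derivative congruences I would simply differentiate $E_s(p\la t)$ termwise. The computation $\der_t E_s(p\la t)=p\la\sum_{k=0}^{d(s)-1}(p\la t)^k/k!=p\la E_s(p\la t)-p\la\cdot(p\la t)^{d(s)}/d(s)!$ reduces everything to controlling the single boundary term, which equals $\la\cdot t^{d(s)}\cdot p\cdot p^{d(s)}\la^{(d(s))}$; by the previous lemma $p^{d(s)}\la^{(d(s))}\in p^s\Z_p$, so the boundary term lies in $p^{s+1}\Z_p[t]\subset p^s\Z_p[t]$. The argument for $\der_\la E_s(p\la t)$ is the same with the roles of $t$ and $\la$ swapped: one gets $pt\bigl(E_s(p\la t)-(p\la t)^{d(s)}/d(s)!\bigr)$ and the boundary term is again $p\cdot p^{d(s)}\la^{(d(s))}\cdot t^{d(s)+1}\in p^{s+1}\Z_p[t]$.

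Finally, the multiplicativity $E_s(p\la(u+v))\equiv E_s(p\la u)E_s(p\la v)\pmod{p^s}$ is a formal consequence of the congruence already proved: interpret both sides as elements of $\Z_p[[u,v]]/p^s\Z_p[[u,v]]$; by the previous step each side is congruent to the corresponding expression with $E_s$ replaced by ${\rm e}$, and ${\rm e}^{p\la(u+v)}={\rm e}^{p\la u}{\rm e}^{p\la v}$ as formal power series in $u,v$. The only point requiring a moment's care is that the products of infinite series must be read inside $\Z_p[[u,v]]/p^s$, where the congruence $E_s\equiv{\rm e}$ is coefficientwise. I do not anticipate any genuine obstacle; the substantive content is entirely packaged in the preceding lemma, and the corollary is essentially bookkeeping of boundary terms.
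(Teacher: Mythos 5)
Your proof is correct and follows exactly the route the paper intends: the corollary is stated without proof as an immediate consequence of the preceding lemma on $p^k\la^{(k)}$, and your termwise coefficient estimates, the boundary-term bookkeeping for the two derivatives, and the reduction of multiplicativity to ${\rm e}^{p\la(u+v)}={\rm e}^{p\la u}{\rm e}^{p\la v}$ in $\Z_p[[u,v]]/p^s$ are precisely the intended verifications.
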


\subsection[Remarks on p\^r]{Remarks on $\boldsymbol{p^r}$}

Let $v_p(a)$ denote the $p$-adic evaluation of $a$.

 Let
$r_1$, $r_2$ be relatively prime positive integers.
Denote $r=r_1/r_2$. Assume that $r>1/(p-1)$.
Then for a positive integer $k$, we have
\begin{gather*}
v_p\big(p^{k r}/k!\big) =\left(kr-\left(\left[\frac{k}{p}\right] + \left[\frac{k}{p^2}\right] + \cdots \right)\right)
> k\left(r-\frac1{p-1}\right)>0.
\end{gather*}
Hence, if $\la\in \Z_p\big[p^{1/r_2}\big]$, then $p^{kr}\la^{(k)} \in \Z_p\big[p^{1/r_2}\big]$.
Moreover, if $s$ is a positive integer and $k > s\frac{p-1}{r(p-1)-1}$, then $p^{kr}\la^{(k)}\in p^s\Z_p\big[p^{1/r_2}\big]$.

Denote
\[
d(r, s) = \left[s\frac{p-1}{r(p-1)-1}\right]+1,
\qquad
E_{r,s}(t) = \sum_{k=0}^{d(r,s)} \frac{t^k}{k!}.
\]
 If $\la\in \Z_p\big[p^{1/r_2}\big]$, then ${\rm e}^{p^{r}\la t}\in \Z_p\big[p^{1/r_2}\big][[t]]$, $E_{r,s}(p^{r}\la t)\in \Z_p\big[p^{1/r_2}\big][t]$ and
\begin{gather*}
{\rm e}^{p^{r}\la t} \equiv E_{r,s}(p^{r}\la t) \pmod{p^s},
\\
\frac{\der}{\der t}E_{r,s}(p^{r}\la t)
\equiv
 p^{r}\la E_{r,s}(p^{r}\la t),
 \\
\frac{\der}{\der \la }E_{r,s}(p^{r}\la t)
\equiv p^{r}t E_{r,s}(p^{r}\la t)
 \pmod{p^s},
 \\
 E_{r,s}(p^{r}\la (u+v)) \equiv E_{r,s}(p^{r}\la u) E_{r,s}(p^{r}\la v)
 \pmod{p^s}.
\end{gather*}

If $r=1/(p-1)$, then
\begin{gather*}
v_p\big(p^{k/(p-1)}/k!\big) =\left(\frac{k}{p-1}-\left(\left[\frac{k}{p}\right] + \left[\frac{k}{p^2}\right] + \cdots \right)\right)
> 0
\end{gather*}
and ${\rm e}^{p^{1/(p-1)} t} \in \Z_p\big[p^{1/(p-1)}\big][[t]]$ but $v_p\big(p^{k/(p-1)}/k!\big)$ does not grow as $k\to\infty$, and
so we get an infinite series rather than a polynomial.

\subsection{Reformulation of the equations}

Change the variable $\la\mapsto p\la$. Then the KZ and dynamical equations take the form
\begin{gather}
\label{KZ1p}
\frac {\der I_j}{\der z_i}
=
\frac 12 \frac{I_i-I_j}{z_i-z_j}, \qquad i\ne j,
\\
\label{KZ2p}
\frac {\der I_i}{\der z_i}
=
p\la I_i - \frac 12 \sum_{j\ne i}\frac{I_i-I_j}{z_i-z_j},
\\
\label{Dp}
\frac {\der I_i}{\der \la}
=
pz_i I_i + \frac 1{2\la} \sum_{j=1}^{2g+1} I_j.
\end{gather}
For any positive integer $s$, we construct below some vectors of polynomials
\begin{gather*}
I(z,\la) = (I_1(z,\la), \dots,I_{2g+1}(z,\la))
\end{gather*}
with coefficients in $\Z_p$ which satisfy the KZ equations \eqref{KZ1p}, \eqref{KZ2p} modulo $p^s$
if $\la\in \Z_p$ and satisfy the dynamical equations \eqref{Dp} modulo $p^{s}$ if
$\la\in \Z_p^\times$.

\subsection[Solutions modulo p\^s]{Solutions modulo $\boldsymbol{p^s}$}

For a positive integer $s$, define
\begin{gather*}
\Phi_s^o(t,z) = \prod_{i=1}^{2g+1}(t-z_i)^{(p^s-1)/2},
\qquad
\Phi_s(t,z,\la) = E_s(p\la t) \Phi^o_s(t,z) ,\\
\Psi^o_s(t,z)=
 \Phi_s^o(t,z) \bigg(\frac{1}{t-z_1}, \dots, \frac{1}{t-z_{2g+1}}\bigg),
\qquad
\Psi_s(t,z,\la) = E_s(p\la t)\Psi^o_s(t,z).
\end{gather*}
Consider the Taylor expansions
\begin{gather*}
 \Psi_s^o(t,z) = \sum_{m=0}^{(2g+1)(p^s-1)/2-1} c_m^o(z) t^m,
\qquad
\Psi_s(t,z,\la) = \sum_{d=0}^{(2g+1)(p^s-1)/2-1+d(s)} c_d(z, \la) t^d,
\end{gather*}
where each $ c_m^o(z)$ is a vector of polynomials in $z$ with integer coefficients,
and
\begin{gather*}
c_d(z,\la) = \sum_{m=0}^d p^{d-m}\la^{(d-m)} c_m^o(z).
\end{gather*}
For any positive integer $\ell$, denote
\begin{gather*}
I^{\ell}(z,\la) = c_{\ell p^s-1}(z,\la).
\end{gather*}
All coordinates of this vector are polynomials in $z, \la$ with coefficients in $\Z_p$.

\begin{Theorem}
\label{thm mod} Let $\ell$ be a positive integer. If $\la\in\Z_p$, then
 $I^{\ell}(z,\la)$ is a solution modulo $p^s$ of the KZ equations \eqref{KZ1p}, \eqref{KZ2p}.
If
$\la\in \Z_p^\times$, then $I^{\ell}(z,\la)$
 is a solution modulo $p^{s}$ of the dynamical equations \eqref{Dp}.

\end{Theorem}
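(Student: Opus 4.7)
The plan is to mimic the complex-variable proof of Theorem~\ref{thm KZ} line by line, translating it into an identity of power series coefficients modulo $p^s$. Under this dictionary the integral $\int_\dl(\cdot)\,{\rm d}t$ is replaced by extraction of the coefficient of $t^{\ell p^s -1}$, so that the $i$-th component $I_i^\ell(z,\la) = [t^{\ell p^s - 1}]\bigl(\Phi_s(t,z,\la)/(t-z_i)\bigr)$ plays the role of $I_i(z,\la)$. Integration by parts is replaced by Lemma~\ref{lem der}: the coefficient of $t^{\ell p^s-1}$ in $\der_t\bigl(E_s(p\la t)f(t,z)\bigr)$ is divisible by $p^s$. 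Finally, the exponent $-\tfrac12$ of the complex master function is replaced by the integer $(p^s-1)/2$; since $p$ is odd, $2$ is a unit in $\Z/p^s\Z$ and $(p^s-1)/2 \equiv -\tfrac12 \pmod{p^s}$, so the numerical constants from the complex proof reappear with the same sign, with $\la$ everywhere replaced by $p\la$.

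\textbf{Cohomological identities and the KZ equations.} I would first verify modulo $p^s$ the three identities analogous to those used in the proof of Theorem~\ref{thm KZ}, namely expressions for $\der_t\Phi_s$, $\der_t\bigl(\Phi_s/(t-z_i)\bigr)$, and $\der_{z_i}\bigl(\Phi_s/(t-z_j)\bigr)$. Each follows by direct differentiation of $\Phi_s = E_s(p\la t)\prod_k(t-z_k)^{(p^s-1)/2}$, using $\der_t E_s(p\la t)\equiv p\la\,E_s(p\la t)\pmod{p^s}$ together with $(p^s-1)/2\equiv -\tfrac12$. Equation \eqref{KZ1p} then follows at once by the partial fraction expansion $\tfrac{1}{(t-z_i)(t-z_j)} = \tfrac{1}{z_i-z_j}\bigl(\tfrac{1}{t-z_i}-\tfrac{1}{t-z_j}\bigr)$ and extraction of $[t^{\ell p^s - 1}]$. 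For \eqref{KZ2p}, I would solve the identity for $\der_t\bigl(\Phi_s/(t-z_i)\bigr)$ for the term $\tfrac32\,\Phi_s/(t-z_i)^2$, extract $[t^{\ell p^s-1}]$, kill the total derivative by Lemma~\ref{lem der}, and simplify the remaining $\sum_{j\ne i}\Phi_s/\bigl((t-z_i)(t-z_j)\bigr)$ by partial fractions.

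\textbf{Dynamical equation and main obstacle.} For \eqref{Dp}, the identity $\der_\la E_s(p\la t)\equiv pt\,E_s(p\la t)\pmod{p^s}$ together with $t/(t-z_i) = 1 + z_i/(t-z_i)$ gives
\begin{gather*}
\frac{\der I_i^\ell}{\der\la} \equiv p\,[t^{\ell p^s-1}]\Phi_s + p z_i\,I_i^\ell \pmod{p^s}.
\end{gather*}
The first cohomological identity rewrites $p\la\,\Phi_s$ as $\der_t\Phi_s + \tfrac12\sum_j \Phi_s/(t-z_j)$, so Lemma~\ref{lem der} yields $p\la\,[t^{\ell p^s-1}]\Phi_s \equiv \tfrac12\sum_j I_j^\ell\pmod{p^s}$. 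This is where the only real obstacle lies: the factor $p\la$ on the left (rather than $\la$ as in the complex proof) arises because $\der_t$ applied to $E_s(p\la t)$ produces $p\la$, and to obtain the $\tfrac{1}{2\la}$ prefactor on the right-hand side of \eqref{Dp} one must divide by $\la$ modulo $p^s$, which is legitimate precisely when $\la\in\Z_p^\times$. This explains why the unit hypothesis is needed only for the dynamical equation, not for the KZ equations. Once the accounting is handled, the argument is a disciplined translation of the complex proof through the three identities recorded above.
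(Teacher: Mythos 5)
Your proposal is correct and follows essentially the same route as the paper: the same three differentiation identities for $\Phi_s$ modulo $p^s$ (with $(p^s-1)/2$ playing the role of $-\tfrac12$), coefficient extraction at $t^{\ell p^s-1}$ in place of integration, Lemma~\ref{lem der} killing the total $t$-derivatives, and the unit hypothesis on $\la$ invoked exactly where one must divide by $\la$ in the dynamical equation. No substantive differences.
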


We call such solutions the {\it $p^s$-hypergeometric solutions} of the joint system of
the KZ and dynamical
equations.

\begin{proof}

The proof uses the following identities:
\begin{gather*}
\frac 1{(t-z_i)(t-z_j)} = \frac1{z_i-z_j}\bigg(\frac1{t-z_i} - \frac1{t-zj}\bigg),
\\
\frac{\der}{\der t}\Phi_s(t,z,\la)
\equiv
p\la \Phi_s(t,z,\la) +\frac{p^s- 1}2 \Phi_s(t,z,\la) \sum_{j=1}^{2g+1} \frac 1{t-z_j} \pmod{p^s},
\\
\frac{\der}{\der t}\frac {\Phi_s(t,z,\la)}{t-z_i}
\equiv
\Phi_s(t,z,\la) \bigg(\frac{p\la}{t-z_i} +\frac{p^s- 3}2\frac1{(t-z_i)^2} +\frac{p^s- 1}2\sum_{j\ne i}
\frac{1}{(t-z_i)(t-z_j)}\bigg)
\end{gather*}
modulo $p^s$. For $j\ne i$, we have
\begin{align*}
\frac {\der }{\der z_i}\Phi_s(t,z,\la)\frac 1{t-z_j}
&= \Phi(t,z,\la) \frac {1-p^s}2\frac 1{(t-z_i)(t-z_j)}
\\
&
=\frac {1-p^s}2 \Phi_s(t,z,\la)\bigg(\frac 1{t-z_i} -\frac 1{t-z_j}\bigg).
\end{align*}
Take the coefficient of $t^{\ell p^s-1}$ in both sides of the equation.
As the result, we obtain modulo $p^s$,
\begin{gather*}
\frac {\der I_j^\ell}{\der z_i}(z,\la)
\equiv
\frac 12 \frac{I_i^\ell(z,\la)-I_j^\ell(z,\la)}{z_i-z_j}, \qquad i\ne j.
\end{gather*}
We have
\begin{align*}
\frac {\der }{\der z_i}\Phi_s(t,z,\la)\frac 1{t-z_i}
&
= \Phi_s(t,z,\la) \frac{3-p^s}2 \frac {1}{(t-z_i)^2}
\\
&
\equiv -\frac{\der}{\der t}\frac {\Phi_s(t,z,\la)}{t-z_i} +
 \Phi_s(t,z,\la) \bigg(\frac{p\la}{t-z_i} +\frac{p^s- 1}2\sum_{j\ne i}
\frac{1}{(t-z_i)(t-z_j)}\bigg).
\end{align*}
Take the coefficient of $t^{\ell p^s-1}$ in both sides of the equation.
As the result, we obtain modulo $p^s$,
\begin{gather*}
\frac {\der I_i^\ell}{\der z_i}(z,\la)
\equiv p\la I_i^\ell(z,\la) - \frac 12 \sum_{j\ne i}\frac{I_i^\ell(z,\la)-I_j^\ell(z,\la)}{z_i-z_J}.
\end{gather*}
Notice that $\frac{\der}{\der t}\frac {\Phi_s(t,z,\la)}{t-z_i}$ does not contribute to this result by Lemma
\ref{lem der}.

We also have modulo $p^s$,
\begin{align*}
\frac {\der }{\der \la}\Phi_s(t,z,\la)\frac 1{t-z_i}
&\equiv p\Phi_s(t,z,\la)\frac{t-z_i+z_i}{t-z_i} =
 p\Phi_s(t,z,\la) + pz_i \Phi_s(t,z,\la)\frac{1}{t-z_i}
\\
&\equiv
\frac 1\la
\frac{\der}{\der t}\Phi_s(t,z,\la)
+\frac 1\la \frac {1-p^s}{2} \Phi_s(t,z,\la)\! \sum_{j=1}^{2g+1} \!\frac 1{t-z_j}
+ p z_i \Phi_s(t,z,\la)\frac{1}{t-z_i} .
\end{align*}
Take the coefficient of $t^{\ell p^s-1}$ in both sides of the equation.
As the result, we obtain modulo $p^{s}$,
\begin{gather*}
\frac {\der I^\ell_i }{\der \la}(z,\la)
\equiv pz_i I_i^\ell(z,\la) + \frac 1{2\la} \sum_{j=1}^{2g+1} I_j^\ell(z,\la).
\end{gather*}
Notice that the coefficient of $t^{p^s-1}$ in $\frac 1\la
\frac{\der}{\der t}\Phi_s(t,z,\la) $ is zero modulo $p^{s}$ since $\la$ is a unit in $\Z_p$
by assumption.
The theorem is proved.
\end{proof}

\subsection[Properties of p\^s-hypergeometric solution]{Properties of $\boldsymbol{p^s}$-hypergeometric solutions}

\begin{Lemma}\label{lem numb}
Assume that $\la\in\Z_p^\times $ and
\begin{gather}
\label{ine}
p^s + 2g-1> s\frac{2p-2}{p-2}.
\end{gather}
Then a $p^s$-hypergeometric solution $I^\ell(z,\mu)$ is zero
unless $\ell =1,\dots,g$.
\end{Lemma}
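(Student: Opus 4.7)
The plan is a direct degree count in the variable $t$: I will show that under the hypothesis, the polynomial $\Psi_s(t,z,\la)$ has $t$-degree strictly less than $\ell p^s - 1$ for every integer $\ell \ge g+1$. Since by definition $I^\ell(z,\la)$ is the coefficient of $t^{\ell p^s - 1}$ in $\Psi_s$, this forces $I^\ell(z,\la)$ to vanish identically (not merely modulo $p^s$), which is what is claimed because $\ell$ is required to be a positive integer.

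First I would compute the $t$-degree of $\Psi_s(t,z,\la) = E_s(p\la t)\,\Psi^o_s(t,z)$. Each component of $\Psi^o_s$ is $(t-z_i)^{(p^s-1)/2 - 1}\prod_{j\ne i}(t-z_j)^{(p^s-1)/2}$, a polynomial in $t$ of degree $(2g+1)(p^s-1)/2 - 1$, while $E_s(p\la t)$ is a polynomial in $t$ of degree $d(s)$ by its very definition. Therefore $\deg_t \Psi_s = (2g+1)(p^s-1)/2 - 1 + d(s)$.

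Next I would verify the numerical inequality. The worst case is $\ell = g+1$, for which a short rearrangement gives
$$
\ell p^s - 1 - \deg_t \Psi_s \;=\; \frac{p^s + 2g + 1 - 2 d(s)}{2}.
$$
Since $d(s) = [s(p-1)/(p-2)] + 1 \le s(p-1)/(p-2) + 1$, we have $2 d(s) \le s(2p-2)/(p-2) + 2$. The hypothesis $p^s + 2g - 1 > s(2p-2)/(p-2)$ therefore yields $p^s + 2g + 1 > 2 d(s)$, hence $\ell p^s - 1 > \deg_t \Psi_s$ for every $\ell \ge g + 1$, and so $I^\ell(z,\la) = 0$.

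The only delicate point is this last piece of bookkeeping: the ``$+1$'' built into $d(s)$ together with the appearance of $2g - 1$ (rather than $2g + 1$) in the hypothesis is exactly what is needed for the inequality to be strict, so the bound in \eqref{ine} is tight for this argument. There is no deeper obstacle; in particular, the assumption $\la \in \Z_p^\times$ is inherited from the dynamical-equation setting but plays no role in this vanishing.
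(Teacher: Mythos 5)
Your proof is correct and follows essentially the same route as the paper: a direct count of the $t$-degree of $\Psi_s(t,z,\la)$, bounding $\deg_t E_s(p\la t)$ by $d(s)\le s\frac{p-1}{p-2}+1$ and checking that inequality \eqref{ine} forces $\deg_t\Psi_s<(g+1)p^s-1$. Your rearrangement $\ell p^s-1-\deg_t\Psi_s=\frac{1}{2}\bigl(p^s+2g+1-2d(s)\bigr)$ for $\ell=g+1$ is exactly the paper's computation written in a slightly different order, and your side remarks (tightness of the bookkeeping, irrelevance of $\la\in\Z_p^\times$) are accurate.
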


\begin{proof} The degree of the polynomial $\Psi_s^o(t,z)$ with respect to $t$ equals
\begin{gather*}
(2g+1) \frac{p^s-1}2 -1 = (g+1)p^s -1 - \frac {p^s+2g+1}2.
\end{gather*}
The degree of the polynomial $E_s(p\la t)$ with respect to $t$ is not greater than
$d(s) \leq s\frac{p-1}{p-2}+1$. Hence the degree of $\Psi_s(t,z,\mu)$ is not greater than
\begin{gather*}
(g+1)p^s -1 - \frac {p^s+2g+1}2+ s\frac{p-1}{p-2} +1 = (g+1)p^s -1- \frac12\bigg(p^s+2g-1 - s\frac{2p-2}{p-2}\bigg).
\end{gather*}
If inequality \eqref{ine} holds, then the polynomial $\Psi_s(t,z,\la)$ does not have monomials of degree~$\ell p^s-1$
for $\ell > g$.
\end{proof}

For any $p^s$-hypergeometric solution $I^\ell(z,\la)$, consider its $\la$-independent term $I^\ell(z,0)=\big(I_1^\ell(z,0),
\dots,I_{2g+1}^\ell(z,0)\big)$.
This is a vector of polynomials in $z$ with
integer coefficients. It is a solution modulo $p^s$ of the KZ equations \eqref{KZ1p} and \eqref{KZ2p} with $\la=0$.
We have
\begin{gather*}
\sum_{j=1}^{2g+1} I_j^\ell(z,0)\equiv 0 \pmod{p^s}
\end{gather*}
 since this sum is the coefficient of $t^{\ell p^s-1}$ in
\begin{gather*}
\sum_{j=1}^{2g+1}\frac{\Phi_s^o(t,z)}{t-z_j} = \frac 2{p^s-1}\frac{\der \Phi^o_s}{\der t}(t,z).
\end{gather*}
The solution $I^\ell(z,\la)$ is a $\la$-deformation of the vector $I^\ell(z,0)$.

\begin{Theorem}[{\cite[Lemma 7.3]{V4}}]

Assume that $p^s>2g+1$. Consider the $\la$-independent terms $I^1(z,0), \dots, I^g(z,0)$ of the $p^s$-hypergeometric solutions
$I^1(z,\la), \dots, I^g(z,\la)$. Project them to~$\FF_p[z]^{2g+1}$. Then the projections are linearly independent over the ring
$\FF_p[z]$.

\end{Theorem}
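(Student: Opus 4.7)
The plan is to reduce the statement to showing that some $g\times g$ minor of the matrix whose columns are $I^1(z,0),\dots,I^g(z,0)$ is a nonzero element of $\FF_p[z]$.

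\textbf{Step 1 (Homogeneity reduction).} First I would observe that $I^\ell(z,0)$ is homogeneous in $z_1,\dots,z_{2g+1}$ of degree $m_\ell := (2g+1)(p^s-1)/2 - \ell p^s$, since $\Phi_s^o(t,z)/(t-z_j)$ is homogeneous in $(t,z)$ of total degree $(2g+1)(p^s-1)/2-1$. Under the hypothesis $p^s>2g+1$ these degrees are strictly decreasing and satisfy $m_g\ge 0$ (the very inequality established in the proof of Lemma~\ref{lem numb}). Hence any hypothetical $\FF_p[z]$-linear relation $\sum_{\ell=1}^g A_\ell(z)\,I^\ell(z,0)\equiv 0$ decomposes into homogeneous pieces, so one may assume each $A_\ell$ is homogeneous of degree $D-m_\ell$ for a common $D$. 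In particular, a relation exists iff every $g\times g$ minor of the matrix $(I^\ell_j(z,0))_{j,\ell}$ vanishes in $\FF_p[z]$.

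\textbf{Step 2 (Explicit formula).} Next I would derive the closed form obtained by binomial expansion of $\Phi_s^o(t,z)/(t-z_j)=(t-z_j)^{n-1}\prod_{i\ne j}(t-z_i)^n$ with $n=(p^s-1)/2$, extracting the coefficient of $t^{\ell p^s-1}$ and reparametrizing $e_j=n-1-a_j$, $e_i=n-a_i$ for $i\ne j$:
\[
I^\ell_j(z,0)\;=\;(-1)^{m_\ell}\!\!\sum_{|\mathbf e|=m_\ell}\binom{n-1}{e_j}\prod_{i\ne j}\binom{n}{e_i}\,z_1^{e_1}\cdots z_{2g+1}^{e_{2g+1}}.
\]
The key input from the mod-$p$ side is that $n$ has base-$p$ expansion $((p-1)/2,(p-1)/2,\dots,(p-1)/2)$ with $s$ digits, since $n=\sum_{i=0}^{s-1}\tfrac{p-1}{2}p^i$. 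Lucas's theorem then gives a sharp nonvanishing criterion: $\binom{n}{e}\not\equiv 0\pmod p$ iff every base-$p$ digit of $e$ lies in $\{0,1,\dots,(p-1)/2\}$, and similarly for $\binom{n-1}{e}$.

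\textbf{Step 3 (Leading minor).} With these tools in hand I would choose rows $j_k=k$ for $k=1,\dots,g$ and, in each entry $I^\ell_{j_k}(z,0)$, isolate a distinguished monomial $z^{\mathbf e(\ell,k)}$ constructed so that its digit pattern realizes the Lucas bounds in an $\ell$-dependent way (for instance, setting $e_i^{(\ell,k)}=n$ in exactly $2\ell-1$ coordinates, with one of the remaining coordinates absorbing the necessary adjustment so that $|\mathbf e(\ell,k)|=m_\ell$). Lucas guarantees that each such monomial appears with nonzero coefficient modulo $p$. Arranging these distinguished monomials under a carefully chosen lex order on $\mathbf e$ (respecting the strict inequalities $m_1>m_2>\cdots>m_g$), one identifies a unique unmatched leading monomial in the Leibniz expansion of $\det(I^\ell_{j_k}(z,0))_{k,\ell}$, so the minor is a nonzero polynomial in $\FF_p[z]$.

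\textbf{Main obstacle.} The difficulty is the combinatorial bookkeeping: the distinguished exponents $\mathbf e(\ell,k)$ must be chosen so that (i) the Lucas criterion applies simultaneously to all $2g+1$ binomial factors in each entry and (ii) the $g!$ products in the Leibniz expansion cannot conspire to cancel. A cleaner route, which is presumably what is carried out in the cited Lemma~7.3 of \cite{V4}, exploits the $S_{2g+1}$ symmetry and the geometric interpretation of $\Phi_s^o$ as a mod-$p^s$ avatar of the hyperelliptic period integrand $\prod(t-z_i)^{-1/2}$: the $\la$-independent terms $I^\ell(z,0)$ are identified with the reductions of $g$ classical period vectors of the curve $y^2=\prod(t-z_i)$, whose linear independence is a well-known consequence of the nondegeneracy of the hyperelliptic period pairing.
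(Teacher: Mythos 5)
The paper itself contains no proof of this statement---it is quoted from \cite[Lemma~7.3]{V4}---so there is no internal argument to compare your route against; I assess the proposal on its own terms. Steps 1 and 2 are correct and are the right reductions: $I^\ell(z,0)=c^o_{\ell p^s-1}(z)$ is homogeneous in $z$ of degree $m_\ell=(2g+1)\frac{p^s-1}{2}-\ell p^s$, with $m_1>\cdots>m_g\ge 0$ exactly when $p^s>2g+1$; the binomial formula for $I^\ell_j(z,0)$ with $n=(p^s-1)/2$ is right; all base-$p$ digits of $n$ equal $(p-1)/2$, so Lucas gives the stated nonvanishing criterion; and linear independence over $\FF_p[z]$ is equivalent (via the fraction field) to the nonvanishing of some $g\times g$ minor. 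The problem is that Step 3 is where the theorem actually lives, and you do not carry it out: one must exhibit exponents $\mathbf{e}(\ell,k)$ for which (a) all $2g+1$ Lucas conditions hold simultaneously and (b) a term order makes the diagonal product strictly dominate every other term of the Leibniz expansion, and you explicitly defer both points as ``the main obstacle''. Moreover, the sample choice you offer is inconsistent with the degree constraint: since $m_\ell=(2g+1-2\ell)n-\ell$, putting $e_i=n$ in $2\ell-1$ coordinates leaves $(2g+2-4\ell)n-\ell$ to be distributed, which is negative for $\ell$ near $g$ and cannot be absorbed by ``one remaining coordinate'' for small $\ell$; the count forced by the degree is roughly $2g-2\ell$ coordinates equal to $n$. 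The digit conditions are also delicate: e.g., for $p=3$, $s=2$ one has $n=4$ and $\binom{4}{2}\equiv\binom{3}{2}\equiv 0 \pmod 3$, so naive choices of the adjusted coordinate can kill the distinguished coefficient.

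The fallback in your closing paragraph does not close the gap. Complex period vectors of $y^2=\prod_i(t-z_i)$ do not ``reduce mod $p$''; the correct characteristic-$p$ avatar of that structure is the Cartier--Manin (Hasse--Witt) matrix, whose entries are coefficients of exactly the kind you are analyzing, and its generic invertibility (ordinarity of the generic hyperelliptic curve) is a statement of the same nature and difficulty as the one to be proved---it is not a formal consequence of the nondegeneracy of the complex period pairing. So as written the proposal establishes the correct framework but leaves the essential combinatorial nonvanishing, which is the entire content of \cite[Lemma~7.3]{V4}, unproved.
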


\subsection{A generalization}
\label{sec more}

 Let
$r_1$, $r_2$ be relatively prime positive integers.
Denote $r=r_1/r_2$. Assume that $r>1/(p-1)$.
Change the variable $\la\to p^{r}\la$ in the KZ and dynamical equations \eqref{KZ1}, \eqref{KZ2},
\eqref{D}. Then the equations take the form
\begin{align}
\label{KZ1rp}
&\frac {\der I_j}{\der z_i}
=
\frac 12 \frac{I_i-I_j}{z_i-z_j}, \qquad i\ne j,
\\
\label{KZ2rp}
&\frac {\der I_i}{\der z_i}
=
p^{r}\la I_i - \frac 12 \sum_{j\ne i}\frac{I_i-I_j}{z_i-z_j},
\\
\label{Drp}
&\frac {\der I_i}{\der \la}
=
p^{r}z_i I_i + \frac 1{2\la} \sum_{j=1}^{2g+1} I_j.
\end{align}
For a positive integer $s$, define
\begin{gather*}
\Psi_{r,s}(t,z,\la) = E_{r,s}(p^{r}\la t)
\prod_{i=1}^{2g+1}(t-z_i)^{(p^s-1)/2}
\bigg(\frac{1}{t-z_1}, \dots, \frac{1}{t-z_{2g+1}}\bigg).
\end{gather*}
Consider the Taylor expansion
$
 \Psi_{r,s}(t,z,\la) = \sum_{d} c_d(z, \la) t^d$.
For any positive integer $\ell$, denote~$I^{\ell}(z,\la) = c_{\ell p^s-1}(z,\la)$.
All coordinates of this vector are polynomials in $z, \la$ with coefficients in $\Z_p\big[p^{1/r_2}\big]$.

\begin{Theorem}
\label{thm mod r}

Let $\ell$ be a positive integer. If $\la\in\Z_p\big[p^{1/r_2}\big]$, then
 $I^{\ell}(z,\la)$ is a solution modulo~$p^s$ of the KZ equations \eqref{KZ1rp}, \eqref{KZ2rp}.
If
$\la\in \big(\Z_p\big[p^{1/r_2}\big]\big)^\times$, then $I^{\ell}(z,\la)$
 is a solution modulo $p^{s}$ of the dynamical equations \eqref{Drp}.

\end{Theorem}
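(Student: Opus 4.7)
The plan is to follow the proof of Theorem \ref{thm mod} verbatim, with $p\la$ replaced by $p^r\la$ in the role of the rescaled dynamical parameter, keeping the exponent $(p^s-1)/2$ in the factor $\Phi^o_s(t,z) = \prod_{i=1}^{2g+1}(t-z_i)^{(p^s-1)/2}$ unchanged. Set $\Phi_{r,s}(t,z,\la) := E_{r,s}(p^r\la t)\Phi^o_s(t,z)$, so that $\Psi_{r,s}(t,z,\la) = \Phi_{r,s}(t,z,\la)\big(1/(t-z_1),\dots,1/(t-z_{2g+1})\big)$. Only two ingredients need reproving in this setting: (i) the analog of Lemma \ref{lem der} for $E_{r,s}$ in place of the truncated exponential $E_s$, and (ii) the logarithmic-derivative identities for $\Phi_{r,s}$.

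For (i), reducing to $f(t)=t^a$ as in Lemma \ref{lem der}, a direct computation gives that the coefficient of $t^{\ell p^s-1}$ in $\frac{\der}{\der t}\big(E_{r,s}(p^r\la t)\,t^a\big)$ equals $\ell p^s\cdot p^{r(\ell p^s - a)}\la^{(\ell p^s - a)}$ (or zero when $\ell p^s - a > d(r,s)$), and the factor $p^{r(\ell p^s - a)}\la^{(\ell p^s - a)}$ lies in $\Z_p\big[p^{1/r_2}\big]$ by the valuation estimate from the Remarks on $p^r$ subsection (which requires $r > 1/(p-1)$). Hence this coefficient is divisible by $p^s$ in $\Z_p\big[p^{1/r_2}\big]$. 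For (ii), combining the identity $\frac{\der}{\der t}E_{r,s}(p^r\la t)\equiv p^r\la\,E_{r,s}(p^r\la t)\pmod{p^s}$ with the logarithmic derivative of $\Phi^o_s$ yields, modulo $p^s$,
\begin{gather*}
\frac{\der}{\der t}\Phi_{r,s} \equiv p^r\la\,\Phi_{r,s} + \frac{p^s-1}{2}\,\Phi_{r,s}\sum_{j=1}^{2g+1}\frac{1}{t-z_j},\\
\frac{\der}{\der t}\frac{\Phi_{r,s}}{t-z_i} \equiv \Phi_{r,s}\bigg(\frac{p^r\la}{t-z_i} + \frac{p^s-3}{2}\frac{1}{(t-z_i)^2} + \frac{p^s-1}{2}\sum_{j\ne i}\frac{1}{(t-z_i)(t-z_j)}\bigg).
\end{gather*}

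Given these, equations \eqref{KZ1rp}--\eqref{Drp} follow exactly as in Theorem \ref{thm mod}: for \eqref{KZ1rp}, differentiate $\Phi_{r,s}/(t-z_j)$ in $z_i$ and expand by partial fractions; for \eqref{KZ2rp}, differentiate $\Phi_{r,s}/(t-z_i)$ in $z_i$ and use the second displayed identity to trade $(t-z_i)^{-2}$ for a total $t$-derivative plus simpler poles; for \eqref{Drp}, differentiate $\Phi_{r,s}/(t-z_i)$ in $\la$, apply $\frac{\der}{\der\la}E_{r,s}(p^r\la t)\equiv p^r t\,E_{r,s}(p^r\la t)\pmod{p^s}$, and then eliminate the resulting $p^r t\,\Phi_{r,s}$ using the first displayed identity. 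In every case the total $t$-derivative that appears contributes $0\pmod{p^s}$ when the coefficient of $t^{\ell p^s-1}$ is extracted, by (i); and in the dynamical equation the factor $1/\la$ is permitted because $\la\in\big(\Z_p\big[p^{1/r_2}\big]\big)^\times$ by hypothesis. The main obstacle is not conceptual but the bookkeeping of $p$-adic valuations of $p^{kr}\la^{(k)}$ in the ring $\Z_p\big[p^{1/r_2}\big]$; these are precisely the estimates collected in the Remarks on $p^r$ subsection, and the assumption $r>1/(p-1)$ is what makes them strong enough to absorb the truncation error of $E_{r,s}$.
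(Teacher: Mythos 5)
Your proposal is correct and follows exactly the route the paper intends: the paper's own ``proof'' of Theorem \ref{thm mod r} is the single remark that it is the same as the proof of Theorem \ref{thm mod} with $p\la$ replaced by $p^{r}\la$, and you have simply spelled out the two points that need rechecking (the analog of Lemma \ref{lem der} for $E_{r,s}$, which works because the coefficient of $t^{\ell p^s-1}$ acquires the factor $\ell p^s$ while $p^{r(\ell p^s-a)}\la^{(\ell p^s-a)}$ stays in $\Z_p\big[p^{1/r_2}\big]$ by the valuation estimate requiring $r>1/(p-1)$, and the congruences for $\tfrac{\der}{\der t}E_{r,s}$ and $\tfrac{\der}{\der\la}E_{r,s}$ from the ``Remarks on $p^r$'' subsection). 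No gaps.
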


The proof is the same as the proof of Theorem \ref{thm mod}. Theorem \ref{thm mod}
is a special case of Theorem~\ref{thm mod r} for $r=1$.

Notice that for degree reasons, Theorem \ref{thm mod r} gives for every $s$ only finitely many solutions~$I^\ell(z,\la)$.

 If $r=1/(p-1)$ and $s$ is a positive integer, we may define
\begin{gather*}
\Psi_{1/(p-1),s}(t,z,\la) = {\rm e}^{p^{1/(p-1)}\la t}
\prod_{i=1}^{2g+1}(t-z_i)^{(p^s-1)/2}
\bigg(\frac{1}{t-z_1}, \dots, \frac{1}{t-z_{2g+1}}\bigg)
\end{gather*}
and then expand this vector into a power series in $t$:\
$ \Psi_{1/(p-1),s}(t,z,\la) = \sum_{d} c_d(z, \la) t^d$.
For any positive integer $\ell$, denote
$I^{\ell}(z,\la) = c_{\ell p^s-1}(z,\la)$.
All coordinates of this vector are polynomials in~$z$,~$\la$ with coefficients in $\Z_p\big[p^{1/(p-1)}\big]$.

\begin{Theorem}
\label{thm p-1}

Let $\ell$ be a positive integer. If $\la\in\Z_p\big[p^{1/(p-1)}\big]$, then
 $I^{\ell}(z,\la)$ is a solution modulo~ $p^s$ of the KZ equations \eqref{KZ1rp}, \eqref{KZ2rp} with $r=1/(p-1)$
If
$\la\in \big(\Z_p\big[p^{1/(p-1)}\big]\big)^\times$, then~$I^{\ell}(z,\la)$
 is a~solution modulo $p^{s}$ of the dynamical equations \eqref{Drp} with $r=1/(p-1)$.
 \end{Theorem}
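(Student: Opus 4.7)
The plan is to adapt the proof of Theorem~\ref{thm mod} (equivalently Theorem~\ref{thm mod r}), replacing the truncated exponential $E_{r,s}\big(p^r\la t\big)$ by the honest series ${\rm e}^{p^{1/(p-1)}\la t}$. Because we now keep every term of the series, the two identities
\[
\frac{\der}{\der t}{\rm e}^{p^{1/(p-1)}\la t}=p^{1/(p-1)}\la \,{\rm e}^{p^{1/(p-1)}\la t},\qquad
\frac{\der}{\der \la}{\rm e}^{p^{1/(p-1)}\la t}=p^{1/(p-1)}t\,{\rm e}^{p^{1/(p-1)}\la t}
\]
hold as exact equalities rather than congruences mod $p^s$. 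Combined with the exact identity $\der_t\Phi_s^o=\Phi_s^o\cdot\frac{p^s-1}{2}\sum_j\frac{1}{t-z_j}$ and the partial fraction expansion of $\frac{1}{(t-z_i)(t-z_j)}$, these produce exact analogs of the three logarithmic-derivative identities appearing in the proof of Theorem~\ref{thm mod}. From these, I would extract the coefficient of $t^{\ell p^s-1}$ on each side.

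The three equations are then verified as in Theorem~\ref{thm mod}: for $i\neq j$, differentiating $\Psi_{1/(p-1),s}/(t-z_j)$ in $z_i$ and extracting the coefficient of $t^{\ell p^s-1}$ gives equation \eqref{KZ1rp} after applying partial fractions. For the diagonal equation \eqref{KZ2rp} at $r=1/(p-1)$, differentiating $\Psi_{1/(p-1),s}/(t-z_i)$ in $z_i$ produces a $1/(t-z_i)^2$ term which is rewritten as a $\der_t$-boundary term plus $\Psi_{1/(p-1),s}\bigl(p^{1/(p-1)}\la/(t-z_i) + \tfrac{p^s-1}{2}\sum_{j\neq i}1/((t-z_i)(t-z_j))\bigr)$. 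For the dynamical equation \eqref{Drp}, I apply the relation $\der_\la=(t/\la)\der_t$ to the exponential factor and rearrange, incurring a boundary term $\frac1\la\der_t\Psi_{1/(p-1),s}$. In every case, the boundary $\der_t[\cdots]$ contribution must be shown to vanish modulo $p^s$ after coefficient extraction; in the dynamical case we use that $\la\in\big(\Z_p[p^{1/(p-1)}]\big)^\times$.

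The main obstacle is the required analog of Lemma~\ref{lem der}: for any monomial $f(t)=t^a$, the coefficient of $t^{\ell p^s-1}$ in $\der_t\bigl({\rm e}^{p^{1/(p-1)}\la t}\, t^a\bigr)$ must lie in $p^s\Z_p\big[p^{1/(p-1)}\big]$. Expanding the exponential as ${\rm e}^{p^{1/(p-1)}\la t}=\sum_m p^{m/(p-1)}\la^{(m)}t^m$ and mimicking the calculation in the proof of Lemma~\ref{lem der}, I would identify the coefficient as
\[
\ell p^s\cdot p^{(\ell p^s-a)/(p-1)}\,\la^{(\ell p^s-a)}.
\]
Writing the scalar as $\ell p^s\cdot p^{m/(p-1)}/m!$ with $m=\ell p^s-a$, Legendre's formula $v_p(m!)=(m-S_p(m))/(p-1)$ gives $v_p\bigl(p^{m/(p-1)}/m!\bigr)=S_p(m)/(p-1)\geq 0$, so the whole coefficient has $p$-adic valuation at least $s$. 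This also confirms that the coefficients $c_d(z,\la)$ of $\Psi_{1/(p-1),s}$ lie in $\Z_p\big[p^{1/(p-1)}\big][z,\la]$ (note that $c_d$ is a \emph{finite} sum in $\la^{(m)}$ because $\Phi_s^o$ has finite $t$-degree). With the analog of Lemma~\ref{lem der} in hand, the remainder of the argument transcribes verbatim from the proof of Theorem~\ref{thm mod}.
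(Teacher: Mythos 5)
Your proposal is correct and follows essentially the same route the paper intends: Theorem \ref{thm p-1} is proved exactly as Theorem \ref{thm mod}, replacing the truncated exponential by the full series ${\rm e}^{p^{1/(p-1)}\la t}$, so that the three logarithmic-derivative identities hold exactly and one extracts the coefficient of $t^{\ell p^s-1}$. The one ingredient the paper leaves implicit --- the analog of Lemma \ref{lem der} --- you supply correctly: the relevant coefficient is $\ell p^s\, p^{m/(p-1)}\la^{(m)}$ with $m=\ell p^s-a$, and Legendre's formula gives $v_p\bigl(p^{m/(p-1)}/m!\bigr)=S_p(m)/(p-1)\ge 0$ (with $S_p(m)$ the sum of base-$p$ digits), so the coefficient lies in $p^s\Z_p\bigl[p^{1/(p-1)}\bigr]$.
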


 Notice that this theorem gives infinitely many solutions $I^{\ell}(z,\la)$.

\begin{Remark}
Another possibility to extend the construction of polynomial solutions is to replace the ring
$\Z_p[p^{r}]$ by another $p$-adic ring, e.g., $\Z_p[\zeta]$, where $\zeta$
 is a $p^m$-th root of 1.
\end{Remark}

\subsection{Exposition of material}
In Section \ref{sec 2}, we describe the hypergeometric solutions of the joint system of the
differential KZ and dynamical
equations associated with $\slt$ and explain their reduction to polynomial solutions modulo $p^s$. In Section \ref{4}, we
briefly comment on
 how the results of Section \ref{sec 2}
 are extended to the joint system of the differential KZ and dynamical equations associated with arbitrary simple Lie algebras.
 In Appendix \ref{sec app}, we consider an example and explain how to construct the
 polynomial solutions modulo $p^s$ of
 qKZ difference equations.

\section[The sl\_2 differential KZ and dynamical equations]{The $\boldsymbol{\mathfrak{sl}_2}$ differential KZ and dynamical equations}
\label{sec 2}

\subsection{Equations}
{\samepage Let $e$, $f$, $h$ be the
standard basis of the complex Lie algebra $\slt$ with relations $[e,f]=h$, ${[h,e]=2e}$, $[h,f]=-2f$. Denote
\[
\Omega = e \otimes f + f \otimes e +
 \frac{1}{2} h \otimes h \in \slt \ox\slt,
\]
the Casimir element.}

Given $n$, for any $x\in \slt$, let $x^{(i)} \in U(\slt)^{\ox n}$
 be the element equal to $x$ in the $i$-th factor and to 1 in other factors.
Similarly, for $1\leq i<j\leq n$, let $\Om^{(i,j)} \in U(\slt)^{\ox n}$
 be the element equal to~$\Omega$ in the $i$-th and $j$-th factors and to~1 in other factors.

Let $z_1,\dots, z_n \in\C$ be distinct and $\la \in\C^\times$.
 For $i=1,\dots,n$, introduce the Gaudin Hamiltonians and the dynamical Hamiltonian
 by the formulas
\begin{gather*}
 H_i(z_1,\dots,z_{n}, \la)
 =
 \frac \la 2 h^{(i)} +\sum_{j\ne i}\frac{\Om^{(i,j)}}{z_i-z_j} \in (U(\slt))^{\ox n},
 \\
 D(z_1,\dots,z_{n}, \la)
 =
 \sum_{i=1}^n \frac{z_i}2h^{(i)} + \sum_{i,j=1}^n \frac{f^{(i)}{\rm e}^{(j)}}\la.
\end{gather*}

Let $ \ox^n_{i=1}V_i$ be a tensor product of $\slt$-modules and $\ka\in\C^\times$.
The system of differential equations on a $ \ox^n_{i=1}V_i$-valued function
$I(z_1, \dots, z_n,\la)$,
\begin{gather}\label{kz}
 \frac{\partial I}{\partial z_i}
 =
 \frac{1}{\kappa} H_i(z_1,\dots,z_{n}, \la) I,
\qquad
 i = 1, \dots , n,
\\
\label{de}
\frac{\partial I}{\partial \la}
 =
 \frac{1}{\kappa} D(z_1,\dots,z_{n}, \la) I,
\end{gather}
is called the system of KZ and dynamical equations.
The system depends on the parameter $\ka$.

\subsection[slt-modules]{$\boldsymbol{\slt}$-modules}
For a nonnegative integer $i$, denote by $L_i$ the $(i+1)$-dimensional module with a
basis $v_i, fv_i,\dots,\allowbreak f^iv_i$ and action
\begin{gather*}
 f\cdot f^kv_i
 =
 f^{k+1}v_i \qquad \text{for}\quad k=0,\dots, i-1,
\\
h\cdot f^kv_i
= (i-2k)f^kv_i \qquad \text{for}\quad
k=0,\dots,i,
\\
e\cdot f^kv_i = k(i-k+1)f^{k-1}v_i\qquad\text{for} \quad k=1,\dots,i,
\end{gather*}
$f\cdot f^iv_i=0$, $e\cdot v_i=0$.

For $\vec m = (m_1,\dots,m_n) \in \Z^n_{\geq 0}$, denote
$|\vec m| = m_1 + \dots + m_n$ and
$L^{\otimes \vec m} = L_{m_1} \otimes \dots \otimes L_{m_n}$.
 For~$J=(j_1,\dots,j_n) \in \mathbb{Z}_{\geq 0}^n$, with $j_s\leq m_s$ for $s=1,\dots,n$, the vectors
\[
f_Jv := f^{j_1}v_{m_1}\otimes \dots \otimes f^{j_n}v_{m_n}
\]
form a basis of
$L^{\otimes \vec m}$. We have
\begin{gather*}
f\cdot f_Jv = \sum_{s=1}^n f_{J+1_s}v,
\qquad
h\cdot f_Jv = ( |m|-2|J|) f_Jv,
\\
e\cdot f_Jv = \sum_{s=1}^n j_s (m_s-j_s+1) f_{J-1_s}v ,
\end{gather*}
where $1_s=(0,\dots, 0, 1,0,\dots,0)$ with 1 staying at the $s$-th place.

For $w \in \Z$, introduce the weight subspace $L^{\otimes \vec m}[w] = \{ v \in L^{\otimes \vec m} \mid h.v = w v \}$. We have the weight decomposition
$L^{\ox \vec m} = \oplus_{k=0}^{|m|}L^{\ox \vec m}[|\vec m|-2k]$.
Denote
\begin{gather*}
\Ik =\{ J\in \Z^n_{\geq 0}\mid |J|=k,\, j_s\leq m_s,\, s=1,\dots,n\}.
\end{gather*}
The vectors $(f_Jv)_{J\in\Ik}$ form a basis of $L^{\otimes \vec m}[|\vec m|-2k]$.

\subsection[Solutions over C]{Solutions over $\boldsymbol{\mathbb C}$}
 \label{Sol in C}

Given $k, n \in \Z_{>0}$, $\vec m = ( m_1, \dots , m_n) \in \Z_{> 0}^n$, $\ka\in\C^\times$,
denote $t=(t_1,\dots,t_k)$, $z=(z_1,\dots,z_n)$. Define the {\it master function}
\begin{align*}
\Phi(t,z,\la) : ={}& \Phi (t_1, \dots , t_k, z_1, \dots , z_n, \la) ={\rm e}^{\la\sum_{l=1}^nz_l/2\ka -\la\sum_{i=1}^k t_i/\ka }
\\
 & {}\times
\prod_{i<j} (z_i-z_j)^{m_im_j/2\ka}
 \prod_{1 \leq i \leq j \leq k} (t_i-t_j)^{2/\ka}
 \prod_{l=1}^{n} \prod_{i=1}^{k} (t_i-z_l)^{-m_l/\ka}.
\end{align*}
For any function or differential form $F(t_1, \dots , t_k)$, denote
\begin{gather*}
\Sym_t [F(t_1, \dots , t_k)] = \sum_{\sigma \in S_k}
F(t_{\sigma_1}, \dots , t_{\sigma_k}) ,
\\
\text{Alt}_t [F(t_1, \dots , t_k)] = \sum_{\sigma \in S_k}
(-1)^{|\sigma|}F(t_{\sigma_1}, \dots , t_{\sigma_k}) .
\end{gather*}
For $J=(j_1,\dots,j_n)\in \Ik$, define the {\it weight function}
\begin{gather*}
W_J(t,z) = \frac{1}{j_1! \dots j_n!}
 \Sym_t \left[ \prod_{s=1}^{n}
 \prod_{i=1}^{j_s}
 \frac{1}{t_{ j_1 + \dots + j_{s-1}+i} - z_s}
 \right] .
\end{gather*}
For example,
\begin{gather*}
W_{(1,0,\dots,0)} = \frac{1}{t_1 - z_1} ,
\qquad
W_{(2,0,\dots,0)} = \frac{1}{t_1 - z_1} \frac{1}{t_2 - z_1} ,
\\
W_{(1,1,0,\dots,0)} = \frac{1}{t_1-z_1} \frac{1}{t_2-z_2}
+ \frac{1}{t_2-z_1} \frac{1}{t_1-z_2} .
\end{gather*}
The function
\[
W(t,z)=\sum_{J\in \Ik}W_J(t,z) f_Jv
\]
is the $L^{\otimes \vec m}[ |\vec m| -2k]$-valued {\it vector weight function}.

Consider the $L^{\otimes \vec m}[ |\vec m| -2k]$-valued
function
\begin{gather}
\label{intrep}
I^{(\dl)}(z_1, \dots , z_n, \la) = \int_{\dl(z,\la)} \Phi(t,z, \la)
 W(t,z) {\rm d}t_1 \wedge \dots \wedge {\rm d}t_k ,
\end{gather}
where $ \dl(z,\la)$ in $\{(z,\la)\} \times \C^k_t$ is a horizontal family of
$k$-dimensional cycles of the twisted homology defined by the multivalued
function $\Phi(t,z, \la)$, see, for example,~\cite{V1,V2}.
The cycles~$\dl(z,\la)$ are multi-dimensional analogs of Pochhammer double loops.

\begin{Theorem}[\cite {FMTV,SV2}]
\label{thm s}
The function $I^{(\dl)} (z,\la)$ is a solution of the KZ and dynamical equations~\eqref{kz} and~\eqref{de}.
\end{Theorem}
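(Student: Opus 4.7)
The plan is to generalize the proof of Theorem~\ref{thm KZ} to the multi-variable tensor-product setting, replacing the single-integrand identities by the cohomological identities for the vector weight function $W$ from \cite{SV2} and \cite{FMTV}. The skeleton is the same as in Theorem~\ref{thm KZ}: differentiate the integrand $\Phi\,W$ in the spectral parameter, compare with the right-hand side of the equation, and eliminate leftover pieces as $d_t$-exact forms using Stokes' theorem on the twisted cycle $\dl(z,\la)$.

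First I would record the logarithmic derivatives of $\Phi$. The derivative $\ka\,\der_{z_i}\log\Phi$ is a $\la/2$ term plus rational tails in the $z_i-z_j$ and $t_a-z_i$; the derivative $\ka\,\der_{t_a}\log\Phi$ is $-\la$ plus rational tails in $t_a-t_b$ and $t_a-z_l$; and $\ka\,\der_\la\log\Phi=\sum_l z_l/2-\sum_a t_a$. These play exactly the role of the three identities at the top of the proof of Theorem~\ref{thm KZ}.

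For the KZ equations~\eqref{kz}, differentiating $\Phi\,W$ under the integral in $z_i$ produces, via $\der_{z_i}\log\Phi$ and the $z_i$-derivatives of the weight functions $W_J$, a combination that I claim matches $(1/\ka)H_i(\Phi\,W)$ modulo a $d_t$-exact form. The verification is the cohomological identity of~\cite{SV2}: the rational tails in $t_a-z_i$ from $\der_{z_i}\log\Phi$ combine with the $z_i$-derivatives of the $W_J$ to reproduce the off-diagonal action of $\Om^{(i,j)}/(z_i-z_j)$, the discrepancy being a total $t$-derivative. For the dynamical equation~\eqref{de}, differentiating in $\la$ gives $\ka\,\der_\la(\Phi\,W)=(\sum_l z_l/2)\Phi\,W-(\sum_a t_a)\Phi\,W$. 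The first piece matches the diagonal $\sum_l z_l h^{(l)}/2$ part of $D$ after evaluating on the weight. For the second piece, one rewrites $\Phi$ using $\ka\,\der_{t_a}\log\Phi=-\la+(\text{rational tails})$, in the same spirit as the step where $\int\Phi\,{\rm d}t=\la^{-1}\!\int\der_t\Phi\,{\rm d}t+(2\la)^{-1}\!\int\Phi\sum(t-z_j)^{-1}\,{\rm d}t$ in the proof of Theorem~\ref{thm KZ}. This reduces the $\sum_a t_a$ tail, modulo an exact form, to $\la^{-1}$ times a rational combination identified with $\sum_{i,j}f^{(i)}e^{(j)}\,W$ by the weight-function identity of~\cite{FMTV}.

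The main obstacle is not the overall structure, which mirrors the toy computation in Section~\ref{CaseC}, but the combinatorial bookkeeping in the two weight-function identities: the off-diagonal action of the $\Om^{(i,j)}$ on $W$ modulo $d_t$-exact forms, and its dynamical counterpart equating $\sum_a t_a\,W$ with $\la^{-1}\sum_{i,j}f^{(i)}e^{(j)}\,W$ modulo $d_t$-exact forms. Since both identities are precisely the main technical results of~\cite{SV2} and~\cite{FMTV}, the proof of Theorem~\ref{thm s} reduces to assembling them, differentiating under the integral sign, and invoking Stokes' theorem for the twisted homology to discard the exact pieces.
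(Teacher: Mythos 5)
Your proposal follows the same route as the paper: the paper proves Theorem~\ref{thm s} by recording the two cohomological identities \eqref{id1} and \eqref{id2} (citing \cite{SV2,SV3} for the KZ part and \cite{FMTV} for the dynamical part) and then integrating them over the twisted cycle $\dl(z,\la)$, with Stokes' theorem killing the exact terms. Your reduction to the two weight-function identities modulo ${\rm d}_t$-exact forms, followed by differentiation under the integral sign, is exactly this argument.
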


The solutions in \eqref{intrep} are called the hypergeometric solutions.

The equations \eqref{KZ1}, \eqref{KZ2}, \eqref{D} in the Introduction and their solutions \eqref{int rep}
are identified with equations \eqref{kz}, \eqref{de} for the weight subspace
$L_1^{\ox (2g+1)}[n-2]$ and their hypergeometric solutions \eqref{intrep} up to a gauge transformation.

In Section \ref{sec proof C}, we sketch the proof of Theorem \ref{thm s}
following \cite{FMTV, SV2}.
The intermediate statement in this proof will be used later when constructing solutions
modulo $p^s$ of the KZ and dynamical equations. The proof is based on the following cohomological relations.

\subsection{Identities for differential forms}
\label{sec proof C}

It is convenient to reformulate the definition of the hypergeometric integrals
\eqref{intrep}. Given ${k, n \in \Z_{>0}}$ and a multi-index $J = (j_1, \dots , j_n)$ with $|J| \leq k$,
denote
\begin{align*}
a_J = a_{J,1} \wedge a_{J,2}\wedge \dots \wedge a_{J,|J|}
: ={}&
\frac{{\rm d}(t_1 - z_1)}{t_1 -z_1} \wedge
\dots
\wedge \frac{{\rm d}(t_{j_1} - z_1)}{t_{j_1} - z_1} \wedge
\frac{{\rm d}(t_{j_1+1} - z_2)}{t_{j_1+1} - z_2} \wedge \cdots
\\
&\wedge
\frac{{\rm d}(t_{j_1+ \dots + j_{n-1}+1} - z_n)}{t_{j_1+ \dots + j_{n-1}+1} - z_n}
\wedge \dots \wedge
\frac{{\rm d}(t_{j_1+ \dots + j_{n}} - z_n)}{t_{j_1+ \dots + j_{n}} - z_n}.
\end{align*}
Here $a_{J, \ell}$ is the $\ell$-th factor of the product in the right-hand side.

Denote
\begin{gather*}
b_J = b_{J,1} \wedge b_{J,2}\wedge \dots \wedge b_{J,|J|}
: =
\frac{{\rm d}t_1}{t_1 -z_1} \wedge
\dots
\wedge \frac{{\rm d}t_{j_1}}{t_{j_1} - z_1} \wedge
\frac{{\rm d}t_{j_1+1} }{t_{j_1+1} - z_2} \wedge \cdots
\\
\hphantom{b_J = b_{J,1} \wedge b_{J,2}\wedge \dots \wedge b_{J,|J|}
: =}{} \wedge
\frac{{\rm d}t_{j_1+ \dots + j_{n-1}+1}}{t_{j_1+ \dots + j_{n-1}+1} - z_n}
\wedge \dots \wedge
\frac{{\rm d}t_{j_1+ \dots + j_{n}} }{t_{j_1+ \dots + j_{n}} - z_n},
\\
c_J = \sum_{l=1}^{|J|} (-1)^{l+1} b_{J,1} \wedge b_{J,2}\wedge \cdots \wedge\widehat{b_{J,l}}
\wedge \cdots\wedge b_{J,k}.
\end{gather*}
Define
\[
\alpha_J = \frac{1}{j_1 ! \cdots j_n !}\Alt_{t_1,\dots,t_k} [a_J], \qquad
\beta_J = \frac{1}{j_1 ! \cdots j_n !}
\Alt_{t_1,\dots,t_k} [c_J].
\]

\begin{Remark}
Recall that we have $k$ variables $t_1,t_2,\dots,t_k$.
The differential form $a_J$ is of degree~${|J|=j_1+\dots+j_n \leq k}$
and depends on the variables $t_1, t_2,\dots, t_{j_1+\dots+j_n}$ only. While, the differential form $\alpha_J$ is of degree
$j_1+\dots+j_n$ and depends on all the variables $t_1,t_2, \dots,t_k$.
\end{Remark}

If $|J| = k$, then for any fixed $z\in \C^n$, we have the identity
\[ \alpha_J = W_J(t, z) {\rm d}t_1 \wedge \dots \wedge {\rm d}t_k \]
on $\{z\} \times \C^k$.
Define
\[
\alpha = \sum_{|J|=k} \alpha_J f_Jv ,
\qquad
\beta = \sum_{|J|=k} \beta_aJ f_Jv .
\]

\begin{Example}
For $k = n = 2$, we have
\begin{gather*}
\alpha_{(2,0)}
= \frac{{\rm d}(t_1 - z_1)}{t_1 -z_1} \wedge \frac{{\rm d}(t_2 - z_1)}{t_2 -z_1} ,
\qquad
\beta_{(2,0)}
= \frac{{\rm d} t_2}{t_2 -z_1}
- \frac{{\rm d}t_1 }{t_1 -z_1} .
\end{gather*}
\end{Example}

The hypergeometric integrals \eqref{intrep} can be defined in terms of
the differential forms $\alpha_J$:
\[
I^{(\delta)} (z_1, \dots , z_n,\lambda) =
\int_{\delta (z,\lambda)}\Phi \alpha = \sum_{J\in \Ik} \bigg(\int_{\delta (z,\lambda)}\Phi \alpha_J \bigg)f_J v .
\]

\begin{Theorem} [\cite{FMTV,SV3}]\label{ss2}\quad
\begin{itemize}\itemsep=0pt
\item[$(i)$]
We have the following algebraic identity for differential forms in $t$, $z$ depending on parameter $\la$:
\begin{gather}
\label{id1}
\kappa {\rm d}_{t,z}\big(\Phi(t,z,\lambda)\alpha\big) = \sum_{l=1}^n H_i(z,\lambda){\rm d}z_i\wedge \Phi(t,z,\lambda)\alpha,
\end{gather}
where ${\rm d}_{t,z}$ denotes the differential with respect to variables $t$, $z$.

\item[$(ii)$]
For any fixed $z$, $\lambda$, we have the following algebraic identity for differential forms in $t$
depending on parameters $z$, $\lambda$:
\begin{gather}
\label{id2}
\kappa \frac{\der}{\der\lambda} \big(\Phi(t,z,\lambda)\alpha\big)
= D(t,z,\lambda) \Phi(t,z,\lambda) \alpha +
 \frac1{\lambda}
{\rm d}_t\big(\Phi(t,z,\lambda) \beta\big),
\end{gather}
where ${\rm d}_t$ denotes the differential with respect to variables $t$.
\end{itemize}
\end{Theorem}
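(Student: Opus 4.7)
The plan is to prove both identities at the level of algebraic differential forms, before any integration, by matching the coefficient of each basis vector $f_Jv$ and reducing to combinatorial statements about the rational forms $\alpha_J$, $\beta_J$, $W_J$ and the $\slt$-action on $L^{\ox\vec m}$. The starting point is the logarithmic differential of $\Phi$:
\begin{gather*}
\ka\,\frac{{\rm d}_{t,z}\Phi}{\Phi}
= \tfrac{\la}{2}\sum_l {\rm d}z_l - \la\sum_i {\rm d}t_i
+ \sum_{i<j}\tfrac{m_im_j}{2}\tfrac{{\rm d}(z_i-z_j)}{z_i-z_j}
+ \sum_{i<j}\tfrac{2\,{\rm d}(t_i-t_j)}{t_i-t_j}
- \sum_{l,a}\tfrac{m_l\,{\rm d}(t_a-z_l)}{t_a-z_l},
\\
\ka\,\frac{\der_\la \Phi}{\Phi} = \tfrac12\sum_l z_l - \sum_a t_a.
\end{gather*}
Every factor of $a_J$ has the form ${\rm d}\log(t_a-z_s)$ and is therefore ${\rm d}_{t,z}$-closed, so $\alpha$ is ${\rm d}_{t,z}$-closed and ${\rm d}_{t,z}(\Phi\alpha)=({\rm d}_{t,z}\Phi)\wedge\alpha$; likewise each $b_{J,m}$ is ${\rm d}_t$-closed, so ${\rm d}_t(\Phi\beta)=({\rm d}_t\Phi)\wedge\beta$. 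This reduces both identities to wedge-product identities between explicit rational forms.

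For part (i), I would extract the coefficient of each ${\rm d}z_i$ on both sides. The three pieces of $H_i$ correspond to three sources on the left: the $\tfrac\la2\sum_l {\rm d}z_l$ term combines with the ${\rm d}z_i$-parts of $\alpha$ (obtained from ${\rm d}(t_a-z_i)={\rm d}t_a-{\rm d}z_i$) to reproduce $\tfrac\la2 h^{(i)}\Phi\alpha$; the Vandermonde term contributes the diagonal $\tfrac14 h^{(i)}h^{(j)}/(z_i-z_j)$ part of $\Om^{(i,j)}/(z_i-z_j)$; and the residue contributions from $-\sum_{l,a}m_l\,{\rm d}(t_a-z_l)/(t_a-z_l)$, wedged with the rational factors of $\alpha_J$ and antisymmetrised in $t$, produce the off-diagonal $(e\ox f+f\ox e)$ pieces of $\Om^{(i,j)}$. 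For part (ii), since $\alpha$ is $\la$-independent, $\ka\,\der_\la(\Phi\alpha)=(\tfrac12\sum_l z_l - \sum_a t_a)\Phi\alpha$; the first summand yields $\sum_l\tfrac{z_l}{2}h^{(l)}\Phi\alpha$, while the second is absorbed by $\tfrac1\la {\rm d}_t(\Phi\beta)$. Expanding $({\rm d}_t\Phi)\wedge\beta$, the $-\la\sum_a {\rm d}t_a$ contribution collapses (via the defining formula for $c_J$ and the Alt in $\beta_J$) to $-\la(\sum_a t_a)\alpha$, and the residue contribution reproduces $\sum_{i,j}f^{(i)}e^{(j)}\alpha$, thereby reconstructing $D(z,\la)\Phi\alpha$.

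The main obstacle is the combinatorial identification of the residue contributions on the $t$-hyperplanes with the matrix elements of $\Om^{(i,j)}$ in part (i) and of $\sum f^{(i)}e^{(j)}$ in part (ii), acting on the weight vector $W(t,z)$. This is the combinatorial heart of \cite{FMTV,SV2}; I would verify it by induction on $k$, with base case $k=1$ being a direct one-variable residue calculation and the inductive step proceeding through the $\Sym_t$ built into the definition of $W_J$ together with the $\Alt_t$ in $\alpha_J$ and $\beta_J$.
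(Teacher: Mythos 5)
The paper does not prove this theorem at all: its ``proof'' is the two-line citation of \cite[Theorem~7.5.2$''$]{SV3} for \eqref{id1} and \cite[Theorems~3.1 and 3.2]{FMTV} for \eqref{id2}, so you are attempting something the paper deliberately outsources. Your architecture is the standard one from those references and is sound as far as it goes: $\alpha$ is a wedge of the closed forms ${\rm d}\log(t_a-z_s)$ and $\beta$ of the closed forms ${\rm d}t_a/(t_a-z_s)$, hence ${\rm d}_{t,z}(\Phi\alpha)=({\rm d}_{t,z}\Phi)\wedge\alpha$ and ${\rm d}_t(\Phi\beta)=({\rm d}_t\Phi)\wedge\beta$, and everything reduces to wedge identities against the logarithmic derivative of $\Phi$.

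The gap is that this reduction leaves exactly the content of the cited theorems unproved. The identification of the residue terms coming from $-\sum_{l,a}m_l\,{\rm d}\log(t_a-z_l)$ wedged with $\alpha_J$ (after the $\Alt_t$) with the matrix elements of $\Om^{(i,j)}$, and of the corresponding terms in $({\rm d}_t\Phi)\wedge\beta$ with those of $\sum_{i,j}f^{(i)}{\rm e}^{(j)}$, \emph{is} the theorem; ``induction on $k$ through the $\Sym_t$ and $\Alt_t$'' is a plan, not a proof, and it is precisely the nontrivial combinatorics of \cite{SV2}. Two further points to repair if you carry the plan out. First, in part $(i)$, identity \eqref{id1} is an equality of $(k+1)$-forms and $\alpha$ itself contains ${\rm d}z$'s through ${\rm d}(t_a-z_s)={\rm d}t_a-{\rm d}z_s$, so both sides have components with two or more ${\rm d}z$'s; ``extracting the coefficient of each ${\rm d}z_i$'' is not a clean decomposition, and you must match every ${\rm d}z$-multidegree. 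Second, in part $(ii)$ your attributions are individually wrong, though their errors cancel: $\tfrac12\sum_l z_l\,\Phi\alpha$ is a scalar multiple of $\Phi\alpha$ and is not $\sum_l\tfrac{z_l}{2}h^{(l)}\Phi\alpha$, whose $f_Jv$-component carries the extra $-\sum_l j_lz_l$; and $\bigl(-\sum_a{\rm d}t_a\bigr)\wedge\beta_J$ equals $-\bigl(\sum_a t_a\bigr)\alpha_J+\bigl(\sum_l j_lz_l\bigr)\alpha_J$ rather than $-\bigl(\sum_a t_a\bigr)\alpha_J$ (check $k=1$, $J=1_s$: $-{\rm d}t_1=-\tfrac{t_1}{t_1-z_s}\,{\rm d}t_1+\tfrac{z_s}{t_1-z_s}\,{\rm d}t_1$). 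The two discrepancies are equal and opposite, so the final identity survives, but the ledger as you wrote it does not balance term by term.
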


The assumptions in part $(ii)$ mean that all differentials ${\rm d}z_1,\dots,{\rm d}z_n$ appearing in $\alpha$ must be put to zero to obtain identity \eqref{id2}.

\begin{proof}
Identity \eqref{id1} follows from \cite[Theorem 7.5.2$''$]{SV3}
and \cite[Theorem 3.1]{FMTV}. Identity \eqref{id2} follows from \cite[Theorem 3.2]{FMTV}.
\end{proof}

Integrating both sides of identities \eqref{id1} and \eqref{id2} over the cycle $\delta(z,\lambda)$
we conclude that the integral
$\int_{\delta(z,\lambda)}\Phi(t,z,\lambda)\alpha$ satisfies the KZ and dynamical equations.

\subsection[Solutions modulo p\^s]{Solutions modulo $\boldsymbol{p^s}$}
 \label{Sol in F}

Given $k, n \in \Z_{>0}$, $\vec m = ( m_1, \dots , m_n) \in \Z_{>0}^n$, $\ka\in\mathbb Q^\times$,
let $p>2$ be a prime number such that~$p$ does not divide the numerator of $\ka$.
Change the variable $\la\to p\la$ in the KZ and dynamical equations \eqref{kz}, \eqref{de}.
Then the equations take the form
\begin{gather}
\label{kzp}
 \frac{\partial I}{\partial z_i}
 =
\frac 1\ka \bigg( p \frac {\la}{2} h^{(i)} + \sum_{j\ne i}\frac{\Om^{(i,j)}}{z_i-z_j}\bigg) I,
\qquad
 i = 1, \dots , n,
\\
\label{dep}
\frac{\partial I}{\partial \la}
 =
\frac1\ka\bigg(p \sum_{i=1}^n \frac{z_i}{2} h^{(i)}
 + \sum_{i,j=1}^n \frac{f^{(i)}{\rm e}^{(j)}}\la \bigg) I.
\end{gather}

 Choose positive integers $M_l$ for $l=1,\dots,n$,
$M_{i,j}$ for $1\le i<j\leq n$, and $M^0$, such that
\[
M_s\equiv -\frac{m_s}\ka,
\qquad
M_{i,j}\equiv \frac{m_im_j}{2\ka},
\qquad
M^0\equiv \frac2\ka
\qquad \pmod{p^s}.
\]
Define the {\it master polynomial}
\begin{align*}
\Phi_s(t,z,\la)
={}&
\prod_{l=1}^n E_s\bigg(p\frac{z_l\la}{2\ka}\bigg) \prod_{i=1}^n E_s\bigg(-p\frac{t_i\la}{\ka}\bigg)
\times
\\
 &{}\times \prod_{1\leq i<j\leq n} (z_i-z_j)^{M_{i,j}}
 \prod_{1 \leq i \leq j \leq k} (t_i-t_j)^{M^0}
 \prod_{s=1}^{n} \prod_{i=1}^{k} (t_i-z_s)^{M_s}.
\end{align*}
Consider the $L^{\otimes \vec m}[ |\vec m| -2k]$-valued
function
\begin{gather*}
\Psi_s(t,z,\la) = \sum_{J\in \Ik}\Phi_s(t,z,\la)W_J(t,z) f_Jv.
\end{gather*}
This is a polynomial in $t$, $z$, $\la$ with coefficients in $\Z_p$.
Consider the Taylor expansion
\begin{gather*}
\Psi_s(t,z,\la) = \sum_{{\rm d}_1,\dots,{\rm d}_k} c_{{\rm d}_1,\dots,{\rm d}_k}(z, \la) t_1^{{\rm d}_1}\dots t_k^{{\rm d}_k}.
\end{gather*}
For any vector $\ell=(\ell_1,\dots,\ell_k)$ with positive integer coordinates, denote
\begin{gather*}
I^{\ell}(z,\la) = c_{\ell_1 p^s-1,\dots, \ell_k p^s-1}(z,\la).
\end{gather*}
All coordinates of this vector are polynomials in $z$, $\la$ with coefficients in $\Z_p$.

\begin{Theorem}
\label{thm 2.4}
Let $\ell=(\ell_1,\dots,\ell_k)$ be a vector with positive integer coordinates.
 If $\la\in\Z_p$, then~$I^{\ell}(z,\la)$ is a solution modulo $p^s$ of the KZ equations \eqref{kzp}.
 If $\la\in \Z_p^\times$, then $I^{\ell}(z,\la)$
 is a~solution modulo $p^{s}$ of the dynamical equations \eqref{dep}.
\end{Theorem}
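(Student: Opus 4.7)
The strategy is to replay the proof of Theorem \ref{thm s} modulo $p^s$, replacing the complex master function $\Phi(t,z,p\la)$ by the polynomial master function $\Phi_s(t,z,\la)$, and substituting extraction of the Taylor coefficient of $t_1^{\ell_1 p^s-1}\cdots t_k^{\ell_k p^s-1}$ for integration over a twisted cycle $\dl(z,\la)$. The two cohomological identities of Theorem \ref{ss2} should lift to congruences modulo $p^s$ because each ingredient of the complex calculation has a natural polynomial mod~$p^s$ analog: logarithmic derivatives of $(t_i-z_s)^{M_s}$, $(z_i-z_j)^{M_{i,j}}$ and $(t_i-t_j)^{M^0}$ agree mod~$p^s$ with $-m_s/\ka$, $m_im_j/(2\ka)$, $2/\ka$ by the defining congruences of the exponents $M_s$, $M_{i,j}$, $M^0$, while $E_s(pz_l\la/2\ka)$ and $E_s(-pt_i\la/\ka)$ obey the derivation rules of the corresponding exponentials modulo $p^s$ by the Corollary following Lemma~\ref{lem der}.

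Concretely, my first step is to establish the two congruences
\[
\ka\,{\rm d}_{t,z}(\Phi_s\alpha)\equiv \sum_{i=1}^n H_i(z,p\la)\,{\rm d}z_i\wedge \Phi_s\alpha,
\qquad
\ka\,\frac{\der}{\der\la}(\Phi_s\alpha)\equiv D(z,p\la)\Phi_s\alpha+\tfrac1\la{\rm d}_t(\Phi_s\bt)\pmod{p^s},
\]
by transcription of the algebraic manipulations in the proof of Theorem \ref{ss2}: the combinatorics of $\alpha$ and $\bt$ are unchanged, and only the logarithmic derivative computations are affected, which is handled by the congruences just listed. My second step is to apply to both identities the coefficient functional $[\,\cdot\,]_\ell$ that reads off the coefficient of $t_1^{\ell_1 p^s-1}\cdots t_k^{\ell_k p^s-1}$ in the $dt_1\wedge\cdots\wedge dt_k$-component. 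Since $\Psi_s=\Phi_s\sum_{J\in\Ik}W_Jf_Jv$ and $\alpha_J$ coincides with $W_J\,{\rm d}t_1\wedge\cdots\wedge{\rm d}t_k$ in top $dt$-degree when $|J|=k$, this functional sends the left-hand sides to $\ka\,\der I^\ell/\der z_i$ and $\ka\,\der I^\ell/\der\la$, and the leading right-hand sides to $H_i(z,p\la)I^\ell$ and $D(z,p\la)I^\ell$, recovering \eqref{kzp} and \eqref{dep}.

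The exact form $\tfrac1\la{\rm d}_t(\Phi_s\bt)$ on the right of the dynamical identity contributes zero to $[\,\cdot\,]_\ell$ modulo $p^s$ by applying Lemma~\ref{lem der} in each variable $t_i$ (the factor $E_s(-pt_i\la/\ka)$ in $\Phi_s$ plays the role of ${\rm e}^{\la t}$ in that lemma), provided $\la\in\Z_p^\times$ so that division by $\la$ preserves $p$-adic valuations. The main obstacle is the bookkeeping in the first step: one must track the mixed $({\rm d}t,{\rm d}z)$-bidegree decomposition of $\alpha$ and verify that, after multiplication by $\Phi_s$ and differentiation, all cross terms assemble into the shifted Gaudin and dynamical Hamiltonians $H_i(z,p\la)$ and $D(z,p\la)$ modulo~$p^s$. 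While the pattern is dictated by the complex case, isolating the $p\la$-contributions coming from the two $E_s$-factors and confirming that they account for exactly the $ph^{(i)}\la/2$ and $pz_ih^{(i)}/2$ terms in \eqref{kzp} and \eqref{dep} is the only genuinely new check.
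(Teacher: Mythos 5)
Your proposal follows the paper's proof exactly: the paper likewise asserts that $\Psi_s(t,z,\la)$ satisfies identities \eqref{id1} and \eqref{id2} modulo $p^s$ (which is where your congruences for the exponents $M_s$, $M_{i,j}$, $M^0$ and the $E_s$-factors enter) and then extracts the coefficient of $t_1^{\ell_1 p^s-1}\cdots t_k^{\ell_k p^s-1}$, killing the $t$-differentials by Lemma~\ref{lem der}. Your additional remarks on the role of $\la\in\Z_p^\times$ and on the bidegree bookkeeping are correct elaborations of steps the paper leaves implicit, not a different argument.
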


We call such solutions the {\it $p^s$-hypergeometric solutions} of the joint system of
the KZ and dynamical equations.

\begin{proof}
The polynomial $\Psi_s(t,z,\la)$ satisfies identities \eqref{id1} and \eqref{id2} modulo $p^s$. Taking the coefficient
of $t_1^{\ell_1 p^s-1}\dots t_k^{\ell_k p^s-1}$
in these identities kills the differentials with respect to $t$ by Lemma~\ref{lem der}. This proves the theorem.
\end{proof}

\begin{Remark}
We observed in Section \ref{sec more} that Theorem \ref{thm mod} can be generalized to
 Theorem \ref{thm mod r} by replacing $p$ with $p^{r}$. Theorem \ref{thm 2.4}
is generalized in the same way. We leave this exercise to readers.
\end{Remark}

\subsection{Equations for other Lie algebras}
\label{4}

The KZ and dynamical equations are
defined for any simple Lie algebra $\mathfrak{g}$ or more generally for any Kac--Moody algebra, see, for example, \cite{FMTV, SV2}.
Similarly to what is done in Section~\ref{Sol in F}, we can construct polynomial solutions modulo $p^s$
of those KZ and dynamical equations.

 The construction of the polynomial solutions modulo $p^s$ in the $\slt$ case is based on
the algebraic identities for differential forms \eqref{id1}, \eqref{id2}.
For an arbitrary Kac--Moody algebra, these
 algebraic identities were developed in \cite{FMTV, SV2}.

\appendix
\section[Solutions modulo p\^s of the qKZ equations]{Solutions modulo $\boldsymbol{p^s}$ of the qKZ equations}
\label{sec app}

In Sections \ref{sec 1} and \ref{sec 2}, we constructed solutions modulo $p^s$ of the differential KZ and
dynamical equations by first
$p^s$-approximating the integrand of the hypergeometric solutions and then taking
the coefficients of the monomials
$t^{\ell p^s-1}$ in the Taylor
expansion of the approximated integrand. In this appendix, we show that
the same idea can be applied to the qKZ difference equations, but instead of considering
the Taylor expansion of the approximated integrand and then taking the coefficients of the monomials $t^{\ell p^s-1}$ we
first take the expansion of the $p^s$-approximated integrand into a sum of Pochhammer polynomials
$[t]_m$ and then take the coefficients of the Pochhammer polynomials with indices $m=\ell p^s-1$.
See this approach in \cite{MuV} for the qKZ equations with no exponential term.
On the hypergeometric solutions of the qKZ difference equations see, for example, \cite{TV1,TV3}.

In this paper, we consider only a baby example of the qKZ equations which illustrates these constructions.

\subsection{Baby qKZ equation}

Let $f(t)$ be a meromorphic function and $a\in\C$. The sum
\begin{gather*}
\int_{(a)} f(t) {\rm d}_1t :=\sum_{n\in\Z}\text{Res}_{t=a+n}f(t),
\end{gather*}
if defined, is called a Jackson integral. If
$f(t) = g(t+1)-g(t)$ is a discrete differential, then the Jackson integral equals zero.

Consider the master function
\begin{gather}
\label{qkz ma}
\Phi (t,z,\la) = {\rm e}^{\la t} \frac{\Gamma(t-z)^2}{\Gamma\left(t-z + \frac12\right)\Gamma
\left(t-z - \frac12\right)}
\end{gather}
and the Jackson integral
\begin{gather*}
I(z,\la) = \int_{(z)} \Phi (t,z,\la) {\rm d}t.
\end{gather*}

\begin{Lemma}
The function $I(z,\la)$ satisfies the difference equation
\begin{gather}
\label{qkz}
{\rm e}^\la I(z-1,\la) = I(z,\la) .
\end{gather}
\end{Lemma}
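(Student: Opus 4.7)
The plan is to reduce the difference equation \eqref{qkz} to the vanishing statement for Jackson integrals of discrete differentials recorded just before the lemma. The key observation, which I would verify first by direct substitution into \eqref{qkz ma}, is the functional identity
\[
{\rm e}^\la\, \Phi(t,z-1,\la) \;=\; \Phi(t+1,z,\la).
\]
This holds because the ratio of $\Gamma$-factors in $\Phi$ depends only on the combination $t-z$, so replacing $z$ by $z-1$ and replacing $t$ by $t+1$ act identically on that piece; the exponential prefactor ${\rm e}^{\la t}$ then accounts for the extra factor of ${\rm e}^\la$.

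Next I would exploit the fact that the Jackson integral $\int_{(a)}$ depends on $a$ only modulo $\Z$: shifting the base point by an integer merely reindexes the sum over residues at $a+n$, $n\in\Z$. In particular $\int_{(z-1)} = \int_{(z)}$ as linear functionals on meromorphic integrands. Combined with the functional identity above, this yields
\[
{\rm e}^\la I(z-1,\la) \;=\; {\rm e}^\la \int_{(z)} \Phi(t,z-1,\la)\,{\rm d}_1 t \;=\; \int_{(z)} \Phi(t+1,z,\la)\,{\rm d}_1 t.
\]

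Subtracting from $I(z,\la)$ gives
\[
I(z,\la) - {\rm e}^\la I(z-1,\la) \;=\; \int_{(z)} \bigl(\Phi(t,z,\la) - \Phi(t+1,z,\la)\bigr)\,{\rm d}_1 t,
\]
and the integrand is precisely the discrete differential $g(t+1)-g(t)$ with $g(t) = -\Phi(t,z,\la)$. Hence the Jackson integral vanishes by the property stated in the preamble, which proves \eqref{qkz}.

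The essential content is the one-line functional identity for $\Phi$; the rest is bookkeeping. The only potentially subtle point is verifying that the Jackson sums converge absolutely so that the reindexing is legitimate, but this is built into the hypothesis of the lemma (the integrals are assumed to be defined). I do not anticipate any serious obstacle.
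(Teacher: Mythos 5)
Your proof is correct and follows essentially the same route as the paper: the functional identity ${\rm e}^\la\Phi(t,z-1,\la)=\Phi(t+1,z,\la)$ is exactly the combination of the two $\Gamma$-function shift relations the paper records, and the vanishing of the Jackson integral of the discrete differential $\Phi(t+1,z,\la)-\Phi(t,z,\la)$ is the paper's third displayed relation. Your version merely makes explicit the integer-shift invariance of the base point $(z)\mapsto(z-1)$, which the paper uses tacitly.
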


Equation \eqref{qkz} is our baby qKZ equation and the function
$I(z,\la)$ is its hypergeometric solution. More general qKZ equations and their hypergeometric solutions can be found,
 for example, in \cite{EFK,FR,TV1,TV3}.

\begin{proof}
We have
\begin{gather*}
\Phi (t+1,z,\la) = \frac{{\rm e}^{\la}(t-z)^2}{\left(t-z + \frac12\right)\left(t-z - \frac12\right)}\Phi(t,z,\la),
\\
\Phi(t,z-1, \la)=\frac{(t-z)^2}{\left(t-z + \frac12\right)\left(t-z - \frac12\right)} \Phi(t,z,\la),
\\
\int_{(z)} \Phi (t+1,z,\la)d_1t = \int_{(z)}\Phi (t,z,\la)d_1t.
\end{gather*}
These relations imply \eqref{qkz}.
\end{proof}

\subsection{Pochhammer polynomials}
Let $m$ be a positive integer. Define the Pochhammer polynomial
\begin{gather*}
(t)_m=\prod_{i=1}^{m}(t-i+1).
\end{gather*}
We have
\begin{gather}
\label{dif}
(t+1)_m = (t)_m\frac{t + 1}{t-m+1},
\qquad
(t+1)_m -(t)_m = m (t)_{m-1}.
\end{gather}

\subsection[Polynomial p\^s-approximations]{Polynomial $\boldsymbol{p^s}$-approximations}

Let $p$ be an odd prime.
 Let $r_1$, $r_2$ be relatively prime positive integers.
Denote $r=r_1/r_2$. Assume that $r>1/(p-1)$.
Change the variable $\la\mapsto p^{r}\la$. Then equation
\eqref{qkz} takes the form:
\begin{gather}
\label{qkzp}
{\rm e}^{p^{r}\la} I(z-1,\la) = I(z,\la) .
\end{gather}
For any positive integer $s$, define the master polynomial
\begin{gather}
\label{ma p}
\Phi_{r,s} (t,z,\la) = E_{r,s}(p^{r}\la t) (t-z-1)_{\frac{p^s-1}2} (t-z-1)_{\frac{p^s+1}2}.
\end{gather}
This is a polynomial in $t$, $z$, $\la$ with coefficients in $\Z_p\big[p^{1/r_2}\big]$.
Expand it into a sum of Pochhammer polynomials in the variable $t$,
\begin{gather*}
\Phi_{r,s} (t,z,\la) = \sum_d c_d(z,\la) (t)_d.
\end{gather*}
Denote $I_{r,s}(z,\la) = c_{p^s-1}(z,\la)$.

\begin{Theorem}
\label{thm last}
The polynomial $I_{r,s}(z,\la)$ is a solution modulo $p^s$ of the qKZ difference equation~\eqref{qkzp}.

\end{Theorem}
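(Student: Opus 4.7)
The plan is to mimic the Jackson-integral proof of equation \eqref{qkz}, with extraction of the coefficient of $(t)_{p^s-1}$ playing the role of Jackson integration. I expect to need two main ingredients: first, a polynomial shift relation
\[
\Phi_{r,s}(t+1, z, \la) \equiv E_{r,s}(p^{r}\la)\,\Phi_{r,s}(t, z-1, \la) \pmod{p^s},
\]
which is the polynomial analogue of $\Phi(t+1,z,\la) = {\rm e}^{\la}\Phi(t,z-1,\la)$ used in the continuous proof; and second, the fact that any polynomial of the form $g(t+1) - g(t)$ contributes only a multiple of $p^s$ to the coefficient of $(t)_{p^s-1}$ in its Pochhammer expansion, a discrete analogue of the vanishing of Jackson integrals of discrete differentials.

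For the shift relation, observe that $(t+1-z-1)_m = (t-z)_m = ((t-z-1)+1)_m$, so the two Pochhammer factors appearing in \eqref{ma p} transform identically under $t \mapsto t+1$ and under $z \mapsto z-1$. The only remaining difference is in the exponential prefactor, and the multiplicative congruence
\[
E_{r,s}(p^{r}\la(t+1)) \equiv E_{r,s}(p^{r}\la t)\,E_{r,s}(p^{r}\la) \pmod{p^s}
\]
from the Corollary in Section \ref{sec more} supplies precisely the factor $E_{r,s}(p^{r}\la)$ required.

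Expanding $\Phi_{r,s}(t,z,\la) = \sum_d c_d(z,\la)\,(t)_d$ and applying $(t+1)_d = (t)_d + d\,(t)_{d-1}$ from \eqref{dif} term by term, I get
\[
\Phi_{r,s}(t+1,z,\la) = \sum_d \bigl[c_d(z,\la) + (d+1)\,c_{d+1}(z,\la)\bigr](t)_d.
\]
Extracting the coefficient of $(t)_{p^s-1}$ on both sides of the shift congruence then yields
\[
I_{r,s}(z,\la) + p^s\,c_{p^s}(z,\la) \equiv E_{r,s}(p^{r}\la)\,I_{r,s}(z-1,\la) \pmod{p^s},
\]
and the ``boundary term'' $p^s\,c_{p^s}$ vanishes modulo $p^s$. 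Since $E_{r,s}(p^{r}\la) \equiv {\rm e}^{p^{r}\la} \pmod{p^s}$, this is exactly \eqref{qkzp}. The only potentially delicate point is the bookkeeping of coefficients when switching between the $(t)_d$ and $(t+1)_d$ bases, but all essential congruences are already available from Section \ref{sec more} and \eqref{dif}, so once the dictionary ``Jackson integration $\leftrightarrow$ extraction of the $(t)_{p^s-1}$-coefficient'' is in place, the computation is essentially forced.
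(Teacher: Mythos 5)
Your proposal is correct and follows essentially the same route as the paper: establish the shift congruence $\Phi_{r,s}(t+1,z,\la)\equiv E_{r,s}\big(p^{r}\la\big)\Phi_{r,s}(t,z-1,\la) \pmod{p^s}$ and then use $(t+1)_m-(t)_m=m(t)_{m-1}$ to see that passing from $t$ to $t+1$ changes the coefficient of $(t)_{p^s-1}$ only by the term $p^s c_{p^s}$, which vanishes modulo $p^s$. The only (harmless) difference is presentational: you obtain the shift relation directly from the fact that the Pochhammer factors transform identically under $t\mapsto t+1$ and $z\mapsto z-1$, while the paper records both transformations as explicit rational multiples of $\Phi_{r,s}(t,z,\la)$ and compares them.
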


\begin{proof}
Let $F(t)=\sum_d a_d(t)_d$ be a polynomial. Denote
$\text{Coef}(F) = a_{p^s-1}$.
We have
\begin{gather}
\Phi_{r,s} (t+1,z,\la) = \frac{E_{r,s}(p^{r}\la) (t-z)^2}
{\left(t-z-\frac{p^s-1}2\right)\Big(t-z -\frac{p^s+1}2\Big)} \Phi_{r,s}(t,z,\la),\nonumber\\
\Phi_{r,s}(t,z-1, \la) =\frac{(t-z)^2}
{\left(t-z-\frac{p^s-1}2\right)\left(t-z -\frac{p^s+1}2\right)}
 \Phi_{r,s}(t,z,\la),\nonumber\\
\label{eq3p}
\text{Coef}(\Phi_{r,s} (t,z,\la))\equiv \text{Coef}(\Phi_{r,s} (t+1,z,\la)) \pmod{p^s},
\end{gather}
where the congruence \eqref{eq3p} follows from \eqref{dif}. Hence
\begin{gather*}
E_{r,s}(p^{r}\la)I_{r,s}(z-1,\la) \equiv I_{r,s}(z,\la) \pmod{p^s},
\end{gather*}
and $I_{r,s}(z,\la)$ is a solution modulo $p^s$ of the qKZ equation \eqref{qkzp}.
\end{proof}

More general qKZ equations are given by multidimensional Jackson integrals whose integrand is a product of exponential factors and
ratios of gamma functions like in \eqref{qkz ma}. To construct polynomial solutions modulo $p^s$ of the qKZ equations,
 we replace the exponential factors in the integrand by the product of the corresponding functions $E_{r,s}(t)$,
 replace the ratios of gamma functions by
 the corresponding Pochhammer polynomial as in~\eqref{ma p}, expand the result into Pochhammer polynomials
 and take the suitable coefficients of that expansion like in Theorem~\ref{thm last}.

\subsection*{Acknowledgements}

Pavel Etingof was supported in part by NSF grant DMS-1916120,
Alexander Varchenko was supported in part by NSF grant DMS-1954266.

\pdfbookmark[1]{References}{ref}
\LastPageEnding

\end{document}